\newcommand{\bea}{\begin{eqnarray}}
\newcommand{\eea}{\end{eqnarray}}
\newcommand{\bna}{\begin{eqnarray*}}
\newcommand{\ena}{\end{eqnarray*}}
\numberwithin{equation}{section} 
\renewcommand{\thefootnote}{\fnsymbol{footnote}}
\theoremstyle{plain}
\newtheorem{theorem}{Theorem}
\newtheorem{lemma}{Lemma}
\newtheorem{proposition}{Proposition}
\theoremstyle{definition}
\newtheorem{remark}{Remark}
\newcommand\blfootnote[1]{%
  \begingroup
  \renewcommand\thefootnote{}\footnote{#1}%
  \addtocounter{footnote}{-1}%
  \endgroup
}
\begin{document}

\title{Hybrid bounds for twists of $GL(3)$ $L$-functions}

\author{Qingfeng Sun}
\address{School of Mathematics and Statistics \\ Shandong University, Weihai
\\ Weihai \\Shandong 264209 \\China}
\email{qfsun@sdu.edu.cn}

\begin{abstract}
Let $\pi$ be a Hecke-Maass cusp form for $SL(3,\mathbb{Z})$ and $\chi=\chi_1\chi_2$
a Dirichlet character with $\chi_i$ primitive modulo $M_i$. Suppose that
$M_1$, $M_2$ are primes such that
$\max\{(M|t|)^{1/3+2\delta/3},M^{2/5}|t|^{-9/20},
M^{1/2+2\delta}|t|^{-3/4+2\delta}\}(M|t|)^{\varepsilon}<M_1<
\min\{
(M|t|)^{2/5}$,\\
$(M|t|)^{1/2-8\delta}\}(M|t|)^{-\varepsilon}$ for
any $\varepsilon>0$, where $M=M_1M_2$, $|t|\geq 1$ and $0<\delta< 1/52$.
Then we have
$$
L\left(\frac{1}{2}+it,\pi\otimes \chi\right)\ll_{\pi,\varepsilon}
(M|t|)^{3/4-\delta+\varepsilon}.
$$
\end{abstract}

\keywords{Hybrid bounds, $GL(3)$ $L$-functions, twists}

\blfootnote{{\it 2010 Mathematics Subject Classification}: 11F66, 11M41}
\maketitle
\tableofcontents

\section{Introduction}

Let $\pi$ be a Hecke-Maass cusp form for $SL(3,\mathbb{Z})$ with normalized Fourier
coefficients $\lambda(n_1,n_2)$ such that $\lambda(1,1)=1$.
Let $\chi$ be a primitive
Dirichlet character modulo $M$. The $L$-function attached to the twisted form
$\pi\otimes \chi$ is given by the Dirichlet series
\bna
L(s,\pi \otimes \chi)=\sum_{n=1}^{\infty}\lambda(1,n)n^{-s}
\ena
for Re$(s)>1$, which can be continued to an entire function with
a functional equation of arithmetic conductor $M^3$. Thus by the Phragmen-Lindel\"{o}f
principle one derives the convexity bound
$L\left(1/2+it,\pi\otimes \chi\right)\ll_{\pi,\varepsilon}
\left(M(1+|t|)\right)^{3/4+\varepsilon}$,
where $\varepsilon>0$ is arbitrary.
The important challenge for us is to prove a sub-convexity bound which improves the convexity bound
by a positive constant. There has been great progress for the sub-convexity problem
of $L(s,\pi \otimes \chi)$ in the work \cite{B}, \cite{HBR} and \cite{Munshi1}-\cite{Munshi5}
(also see \cite{Li}, \cite{McKee} and \cite{Nunes} for $t$-aspect sub-convexity for
$L(s,\pi)$).
In \cite{B}, Blomer established the bound
$$L\left(\frac{1}{2}+it,\pi\otimes \chi\right)\ll_{\pi,t,\varepsilon}
M^{3/4-1/8+\varepsilon}$$
for $\pi$ self-dual and $\chi$ a quadratic character modulo prime $M$.
This was extended by Huang in \cite{HBR}, where by combining the methods in \cite{B} and \cite{Li},
he showed that
$$L\left(\frac{1}{2}+it,\pi\otimes \chi\right)\ll_{\pi,\varepsilon}
(M(1+|t|))^{3/4-1/46+\varepsilon}$$ for the same
form $\pi \otimes \chi$ as in \cite{B}.
For general $GL(3)$ Hecke-Maass cusp forms, the sub-convexity results
have recently been established in several cases by Munshi
in a series of papers \cite{Munshi31}-\cite{Munshi5}.
In the $t$-aspect, Munshi proved in \cite{Munshi32} that
\bea
L\left(\frac{1}{2}+it,\pi\right)\ll_{\pi,\varepsilon}
(1+|t|)^{3/4-1/16+\varepsilon}.
\eea
For $\chi$ a primitive Dirichlet character modulo prime $M$,
he proved in \cite{Munshi4}, \cite{Munshi5} that
$$
L\left(\frac{1}{2},\pi\otimes \chi\right)\ll_{\pi,\varepsilon}
M^{3/4-1/308+\varepsilon}.
$$
For $\chi=\chi_1\chi_2$ a Dirichlet character with $\chi_i$ primitive
modulo prime $M_i$ such that $\sqrt{M_2}M^{4\vartheta}<M_1<M_1M^{-3\vartheta}$,
he showed in \cite{Munshi31} that
$$
L\left(\frac{1}{2},\pi\otimes \chi\right)\ll_{\pi,\varepsilon}
M^{3/4-\vartheta+\varepsilon},
$$
where $M=M_1M_2$ and $0<\delta<1/28$.

The purpose of this paper is to extend Munshi's some results in \cite{Munshi31} and
\cite{Munshi32}. Our main result is the following.

\begin{theorem}
Let $\pi$ be a Hecke-Maass cusp form for $SL(3,\mathbb{Z})$ and $\chi=\chi_1\chi_2$
a Dirichlet character with $\chi_i$ primitive modulo $M_i$.
Suppose that $M_1$, $M_2$ are primes such that
$\max\{(M|t|)^{1/3+2\delta/3},
M^{2/5}|t|^{-9/20},M^{1/2+2\delta}|t|^{-3/4+2\delta}\}
(M|t|)^{\varepsilon}<M_1<
\min\{(M|t|)^{2/5}$,\\$(M|t|)^{1/2-8\delta}\}(M|t|)^{-\varepsilon}$
for any $\varepsilon>0$,
where $M=M_1M_2$, $|t|\geq 1$ and $0<\delta<1/52$. Then we have
$$
L\left(\frac{1}{2}+it,\pi\otimes \chi\right)\ll_{\pi,\varepsilon}
(M|t|)^{3/4-\delta+\varepsilon}.
$$
\end{theorem}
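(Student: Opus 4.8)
The plan is to adapt Munshi's circle-method/delta-method machinery for $GL(3)$ $L$-functions (as in \cite{Munshi32}, \cite{Munshi31}) to the present hybrid situation, carrying out the conductor-lowering step through the factor $\chi_1$, whose modulus $M_1$ is exactly the parameter left free in the statement. By the approximate functional equation for $L(s,\pi\otimes\chi)$ together with a dyadic decomposition, it suffices to show that
$$S(N):=\sum_{n\geq1}\lambda(1,n)\,\chi(n)\,n^{-it}\,V\!\left(\frac{n}{N}\right)\ll_{\pi,\varepsilon}(M|t|)^{3/4-\delta+\varepsilon}N^{1/2}$$
for a fixed smooth weight $V$ supported on $[1,2]$ and every $N\ll (M|t|)^{3/2+\varepsilon}$, the decisive range being $N\asymp (M|t|)^{3/2}$. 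Writing $\chi=\chi_1\chi_2$, I would keep $\chi_1(n)$ attached to $n$ as additive characters of modulus $M_1$ (via its Gauss sum), introduce a dummy variable $m\asymp N$ to carry $\chi_2(m)$, and separate $n$ from $m$ by the DFI delta-method; Munshi's conductor-lowering trick then allows the circle-method moduli to be shortened to $q\le Q$ with $Q$ roughly $(M|t|)^{\eta}(N/M_1)^{1/2}$ for a small $\eta>0$, which is where a saving of size $\asymp M_1$ over the trivial count of $(q,a)$-pairs enters.

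Next I would decouple the two oscillations by dual summation. Applying the $GL(3)$ Voronoi summation formula to the $n$-sum against the relevant additive character --- of modulus roughly $qM_1$ --- produces a dual sum over $n_2$ of length about $(qM_1)^3/N$, twisted by hyper-Kloosterman sums and by an integral transform of $V(\cdot/N)(\cdot)^{-it}$ built from the $GL(3)$ Bessel kernel; the combined phase, a cube-root term from the kernel together with the logarithmic term coming from $n^{-it}$, I would linearise by a stationary-phase analysis, keeping careful track of the resulting weight uniformly in all parameters. To the $m$-sum, which carries $\chi_2$ together with the additive characters of moduli $q$ and $M_1$, I would apply Poisson summation, which in the relevant ranges yields a short dual sum together with Gauss sums attached to $\chi_2$. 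At this stage $S(N)$ has become a multiple sum over $q$, over the short dual variables and over $n_2$, weighted by $\lambda(n_1,n_2)$ and explicit arithmetic factors.

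I would then apply Cauchy--Schwarz in the $q$- and $n_2$-variables to eliminate $\lambda(n_1,n_2)$, using the Rankin--Selberg bound $\sum_{n_2\le X}|\lambda(n_1,n_2)|^2\ll_\pi (n_1 X)^{1+\varepsilon}$, and open the square. The inner sum now runs over the longest variable $n_2$ and is treated by a further Poisson summation: its diagonal contribution gives a main term that is estimated directly, while its off-diagonal contribution reduces, after splitting into prime-power moduli, to complete exponential sums modulo $q_1q_2M_1M_2$ of Kloosterman, hyper-Kloosterman and Gauss-sum type, for which square-root cancellation is furnished by Weil's and Deligne's bounds. The argument then closes by assembling these estimates, balancing the lengths of all dual sums, and verifying that the choice of $Q$ above is admissible: the lower bounds on $M_1$ in the statement are precisely the conditions forcing $Q\ge (M|t|)^{\eta}$, the dual $m$-sum to be non-empty, and the diagonal term after the second Poisson step to be $\ll (M|t|)^{3/2-2\delta}$, while the upper bounds keep the off-diagonal terms and the Cauchy--Schwarz loss under control; the two families of inequalities are compatible precisely when $0<\delta<1/52$, the binding one being $(M|t|)^{1/3+2\delta/3}<(M|t|)^{1/2-8\delta}$.

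I expect the main obstacle to be this last stage. After Cauchy--Schwarz and the second Poisson application one must simultaneously extract genuine square-root cancellation from the off-diagonal exponential sums \emph{and} ensure that the diagonal does not absorb the saving, and here the hybrid nature of the problem --- two independent conductors $M$ and $|t|$, coupled through the $GL(3)$ Bessel transform, together with three moduli $q$, $M_1$, $M_2$ of genuinely different sizes --- makes the optimisation tight and is precisely what confines $M_1$ to the stated window. A secondary but still delicate point is performing the stationary-phase analysis of the combined Bessel-plus-$n^{-it}$ oscillation with enough uniformity that the weight functions entering the Poisson and Cauchy--Schwarz steps remain negligible outside their expected supports.
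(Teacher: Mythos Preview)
Your overall architecture --- separate $n$ from a dummy $m$ by a circle/delta method, lower the conductor through $M_1$, apply $GL(3)$ Voronoi to $n$ and Poisson to $m$, then Cauchy--Schwarz and a second Poisson --- matches the paper's, but one essential ingredient is missing: the \emph{$t$-aspect} conductor lowering. In the paper the equation $n=m$ is detected not only through the congruence $M_1\mid n-m$ but also analytically, by inserting the redundant factor
\[
\frac{1}{K}\int_{\mathbb{R}}V\!\left(\frac{v}{K}\right)\left(\frac{n}{m}\right)^{iv}\mathrm{d}v
\]
with a free parameter $(Mt)^{\varepsilon}<K<t$. This transfers the full oscillation $n^{-it}$ onto $m$ (as $m^{-i(t+v)}$) while $n$ keeps only the mild phase $n^{iv}$ with $v\asymp K$; correspondingly the circle-method modulus becomes $Q=\sqrt{N/(KM_1)}$, and after Voronoi the dual $n$-length is $\asymp N^{1/2}K^{3/2}M_1^{3/2}$. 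The final bound (Propositions~1 and~2) is then optimised over $K$, with $K=\max\{N^{1/4}/M_1,\,(Mt)^{6/5}/(NM_1)^{3/5}\}$, and the admissibility conditions on $K$ in those Propositions are what generate most of the inequalities on $M_1$ in the statement.

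In your sketch $n^{-it}$ stays with $n$ through Voronoi and you plan to handle the resulting Bessel-plus-logarithm phase by stationary phase alone. But stationary phase only \emph{localises} the dual variable, to $n_1^2n_2\asymp (qM_1)^3t^3/N$; it does not shorten the sum, and with a dual length inflated by a factor $(t/K)^3$ the Cauchy--Schwarz/Poisson endgame cannot recover enough in $t$. This is exactly why Munshi introduced the $v$-integral in \cite{Munshi32} for the pure $t$-aspect, and the present paper is precisely the fusion of that device with the $M_1$-lowering of \cite{Munshi31}. Your formula $Q\approx(M|t|)^{\eta}(N/M_1)^{1/2}$ with $\eta>0$ also signals the omission: it contains no $t$-dependent shortening. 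The minor variants you propose --- DFI $\delta$-symbol in place of Kloosterman's circle method, and opening $\chi_1$ via its Gauss sum rather than via the congruence $M_1\mid n-m$ --- are harmless, and your identification of $(M|t|)^{1/3+2\delta/3}<(M|t|)^{1/2-8\delta}$ as the source of $\delta<1/52$ is correct; but without the $K$-mechanism the argument will not close.
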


We also have a result compared to (1.1).
\begin{theorem}
Let $\pi$ be a Hecke-Maass cusp form for $SL(3,\mathbb{Z})$ and $\chi=\chi_1\chi_2$
a Dirichlet character with $\chi_i$ primitive modulo $M_i$.
Suppose that $M_1$, $M_2$ are primes such that
$\max\{M^{3/8-2\delta/3}|t|^{3/8},M^{2/5}
|t|^{-9/20},M^{5/8-2\delta}|t|^{-5/8}\}
(M|t|)^{\varepsilon}<M_1<
\min\{(M|t|)^{2/5},M^{8\delta}\}(M|t|)^{-\varepsilon}$ for any $\varepsilon>0$,
where $M=M_1M_2$, $|t|\geq 1$ and $0<\delta\leq 1/16$. Then we have
$$
L\left(\frac{1}{2}+it,\pi\otimes \chi\right)\ll_{\pi,\varepsilon}M^{\delta}
(M|t|)^{3/4-1/16+\varepsilon}.
$$
\end{theorem}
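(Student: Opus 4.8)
The plan is to run Munshi's circle-method argument --- the hybrid of the twist method of \cite{Munshi31} and the $t$-aspect method of \cite{Munshi32} that also underlies the proof of Theorem 1 --- carrying out the final parameter optimization in the regime that yields the saving $1/16$ at the cost of the factor $M^{\delta}$ and the stated range of $M_1$. \emph{Step 1 (approximate functional equation).} By the functional equation of $L(s,\pi\otimes\chi)$, whose analytic conductor is $\asymp(M|t|)^3$, together with a dyadic partition of unity, it suffices to bound
$$
S(N)=\sum_{n\geq 1}\lambda(1,n)\,\chi(n)\,n^{-it}\,V\!\left(\frac{n}{N}\right),\qquad N\ll(M|t|)^{3/2+\varepsilon},
$$
for a fixed smooth, compactly supported $V$. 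The critical range is $N\asymp(M|t|)^{3/2}$, and since Rankin--Selberg gives the trivial bound $S(N)\ll(M|t|)^{3/2+\varepsilon}$, the target is the extra saving $S(N)\ll_{\pi,\varepsilon}M^{\delta}(M|t|)^{3/2-1/16+\varepsilon}$.

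\emph{Step 2 (separation of oscillation).} Factor $\chi=\chi_1\chi_2$ and use the Gauss-sum expansion $\chi_1(n)=\tau(\bar\chi_1)^{-1}\sum_{a\bmod M_1}\bar\chi_1(a)e(an/M_1)$ to replace $\chi_1$ by additive characters modulo $M_1$; it is this splitting of the conductor along $M=M_1M_2$ that eventually forces the $M_1$-dependent hypotheses. To decouple the $GL(3)$ coefficient $\lambda(1,n)$ from the archimedean phase $n^{-it}$ and from $\chi_2(n)$, introduce an auxiliary variable $m\asymp N$, insert $\delta(n-m)$, and expand it by the DFI delta method with a conductor-lowering device keyed to $M_1$ (which shortens the resulting $q$-sum by a factor $\asymp M_1$ compared with the vanilla circle method). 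Schematically, up to a negligible error and an overall normalization, one obtains
$$
\sum_{q\leq Q}\frac{1}{q}\sum_{a\bmod M_1q}^{*}\int_{\mathbb R}\Bigl(\sum_{n}\lambda(1,n)\,e\!\Bigl(\tfrac{an}{M_1q}\Bigr)\mathcal V_1(n)\Bigr)\Bigl(\sum_{m}\chi_2(m)\,m^{-it}\,e\!\Bigl(-\tfrac{am}{M_1q}\Bigr)\mathcal V_2(m)\Bigr)dx,
$$
where $\mathcal V_1,\mathcal V_2$ are smooth weights depending on $x,q,N,t$.

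\emph{Step 3 (dual side and Cauchy--Schwarz).} Apply the $GL(3)$-Voronoi summation formula to the $n$-sum and Poisson summation to the $m$-sum; each produces a dual sum whose length is controlled by its conductor ($M_1q$ and $N$ on the $GL(3)$ side; $M_1qM_2$, $|t|$ and $N$ on the other), while the weights turn into oscillatory integrals that one analyses by iterated stationary phase, keeping the $t$-dependence explicit. Then apply Cauchy--Schwarz in the surviving variables to eliminate the $GL(3)$ coefficients, using $\sum_{n\leq X}|\lambda(1,n)|^2\ll_\pi X$; squaring out and executing one further Poisson summation in the variable released by Cauchy--Schwarz separates the result into a main (diagonal) term and an off-diagonal term.

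\emph{Step 4 (estimation and optimization --- the main obstacle).} The diagonal contributes the expected amount, so the crux is the off-diagonal: after the two Poisson/Voronoi steps it is governed by a complete exponential sum modulo $M_1$, $M_2$ and $q$ (Ramanujan/Kloosterman sums coming from $GL(3)$-Voronoi, together with character sums attached to $\chi_2$) weighted by an archimedean integral that must be estimated uniformly in $t$ by stationary phase. Bounding this off-diagonal by the diagonal --- equivalently, keeping the various dual sums neither trivially short nor so long that square-root cancellation is lost --- is exactly what pins down the admissible range: the lower bounds $M_1\gg\max\{M^{3/8-2\delta/3}|t|^{3/8},\,M^{2/5}|t|^{-9/20},\,M^{5/8-2\delta}|t|^{-5/8}\}(M|t|)^{\varepsilon}$, the upper bounds $M_1\ll\min\{(M|t|)^{2/5},\,M^{8\delta}\}(M|t|)^{-\varepsilon}$, and $0<\delta\leq1/16$. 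Under these choices the overall saving is $(M|t|)^{1/16}M^{-\delta}$, which is the asserted bound; choosing the parameters as in the proof of Theorem 1 instead gives the saving $(M|t|)^{\delta}$ in the wider range recorded there, and formally letting $M\to1$ here recovers the $t$-aspect bound (1.1). I expect the delicate points to be precisely this exponential-sum estimate together with its uniform-in-$t$ stationary-phase analysis, and the bookkeeping of conductor sizes that determines the range of $M_1$.
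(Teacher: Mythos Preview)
Your plan captures the overall architecture --- approximate functional equation, separation via a circle method with conductor lowering, Voronoi on the $n$-sum, Poisson on the $m$-sum, Cauchy--Schwarz, a second Poisson, and endgame optimization --- but it diverges from the paper in two places, one of which is a genuine gap.

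First, the non-archimedean conductor lowering is handled differently. The paper does \emph{not} open $\chi_1$ via its Gauss sum. It keeps the full character $\chi(m)=\chi_1(m)\chi_2(m)$ on the auxiliary variable $m$, inserts the congruence $M_1\mid n-m$, and then applies \emph{Kloosterman's} circle method (equation~(1.4), not DFI) to $\delta((n-m)/M_1)$, with $Q=\sqrt{N/(KM_1)}$. Your Gauss-sum route may be morally equivalent, but it is not the paper's device, and your schematic already shows a mismatch: the $\chi_1$-information has vanished from the displayed $m$-sum.

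Second --- and this is the real gap --- your Step~2 omits the archimedean conductor-lowering parameter $K$ that is the heart of Munshi's $t$-aspect method and is indispensable for any hybrid bound. Before applying the circle method the paper inserts the harmless factor
\[
\frac{1}{K}\int_{\mathbb{R}}V\!\left(\frac{v}{K}\right)\left(\frac{n}{m}\right)^{iv}\mathrm{d}v,\qquad (Mt)^{\varepsilon}<K<t,
\]
which attaches $n^{iv}$ to the Voronoi side (shortening the dual $n$-sum to $n_1^2n_2\ll N^{1/2+\varepsilon}K^{3/2}M_1^{3/2}$) and $m^{-i(t+v)}$ to the Poisson side, and simultaneously shrinks $Q$ from $\sqrt{N/M_1}$ to $\sqrt{N/(KM_1)}$. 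It is precisely the choice $K=\max\{N^{1/4}/M_1,\ (Mt)^{6/5}/(NM_1)^{3/5}\}$, balanced against the $M_1$-lowering through Propositions~1 and~2, that produces the stated range of $M_1$ and the saving $(M|t|)^{1/16}M^{-\delta}$. Your sentence ``decouple $\lambda(1,n)$ from the archimedean phase $n^{-it}$ \dots\ insert $\delta(n-m)$'' does not actually achieve that decoupling: after $\delta(n-m)$ alone the phase $m^{-it}$ still carries full conductor $t$, and the optimization you sketch in Step~4 cannot be carried out without a free parameter playing the role of $K$. You should make the $K$-device explicit and track it through both the Voronoi dual length and the stationary-phase analysis of $U^{\dag}$, $V^{\dag}$ as in Section~5 of the paper.
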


\begin{remark}
Theorems 1 and 2 give us a sub-convexity bound for $L\left(\frac{1}{2}+it,\pi\otimes \chi\right)$
for $M$ and $t$ in some range.
For example, if $|t|>M^{1/5}$ and $(M|t|)^{1/3+2\delta/3+\varepsilon}
<M_1<(M|t|)^{2/5-\varepsilon}$ with $0<\delta\leq 1/80$, then we have
$$
L\left(\frac{1}{2}+it,\pi\otimes \chi\right)\ll_{\pi,\varepsilon}
(M|t|)^{3/4-\delta+\varepsilon}.
$$
If $|t|>M^{1/4}$ and $(M|t|)^{3/8+\varepsilon}M^{-2\delta/3}
<M_1<M^{8\delta-\varepsilon}$ with $0<\delta\leq 1/16$, then we have
$$
L\left(\frac{1}{2}+it,\pi\otimes \chi\right)\ll_{\pi,\varepsilon}
M^{\delta}(M|t|)^{3/4-1/16+\varepsilon}.
$$
\end{remark}

To prove Theorems 1 and 2, we will use the same method as in \cite{Munshi31} and
\cite{Munshi32}.
Suppose that $t\geq 1$. Then by the approximate functional equation we have
\bea
L\left(\frac{1}{2}+it,\pi\otimes \chi\right)\ll_{\pi,\varepsilon} (M t)^{\varepsilon}
\sup_{N\leq (Mt)^{3/2+\varepsilon}}\frac{|\mathcal {S}(N)|}{\sqrt{N}},
\eea
where
\bna
\mathcal {S}(N)=\sum_{n=1}^{\infty}\lambda(1,n)\chi(n)n^{-it}
V\left(\frac{n}{N}\right)
\ena
for some smooth function $V$ supported in $[1,2]$, normalized
such that $\int_{\mathbb{R}}V(v)\mathrm{d}v=1$ and satisfying $V^{(\ell)}(x)\ll_\ell 1$.
Note that by Cauchy's inequality and the Rankin-Selberg estimate $
\sum_{n\leq x}|\lambda(1,n)|^2\ll_{\pi}x,$
we have the trivial bound $\mathcal {S}(N)\ll_{\pi,\varepsilon} N$. Thus Theorem
1 (resp. Theorem 2) is true for
$N\ll (Mt)^{3/2-2\delta}$ (resp.
$N\ll (Mt)^{11/8}M^{2\delta}$).
In the following, we will estimate $\mathcal {S}(N)$ in the range
\bea
(Mt)^{3/2-2\delta}<N\leq (Mt)^{3/2+\varepsilon}\quad(\mbox{resp.}\;
(Mt)^{11/8}M^{2\delta}<N\leq (Mt)^{3/2+\varepsilon}).
\eea

The first step is to separate the Fourier coefficients $\lambda(1,n)$ and
$\chi(n)n^{-it}$. Let $\delta(n)=1$ if $n=0$ and equals 0 otherwise.
Like in \cite{Munshi31} and
\cite{Munshi32} we apply Kloosterman's version
of the circle method, which states that for any $n\in \mathbb{Z}$ and $Q\in \mathbb{R}^+$, we have
\bea
\delta(n)=2\mathrm{Re}\int_0^1 \sum_{1\leq q\leq Q}\sum_{Q<a\leq q+Q \atop (a,q)=1}
\frac{1}{aq}e\left(\frac{n\overline{a}}{q}-\frac{n\zeta}{aq}\right)\mathrm{d}\zeta,
\eea
where throughout the paper $e(z)=e^{2\pi iz}$ and $\overline{a}$ denotes
the multiplicative inverse of $a$ modulo $q$.

To construct a conductor lowering system to take care of both
$t$-aspect and $M$-aspect, we introduce a parameter $K$ satisfying
$(Mt)^{\varepsilon}<K<t$ and write
\bna
\mathcal {S}(N)=\frac{1}{K}\int_{\mathbb{R}}
V\left(\frac{v}{K}\right)\sum_{n=1}^{\infty}
\lambda(1,n)V\left(\frac{n}{N}\right)
\sum_{m\in \mathbb{Z} \atop M_1|n-m}
\chi(m)m^{-it}U\left(\frac{m}{N}\right)
\delta\left(\frac{n-m}{M_1}\right)
\left(\frac{n}{m}\right)^{iv}\mathrm{d}v,
\ena
where $U$ is a smooth function supported in $[1/2,5/2]$, $U(x)=1$
for $x\in [1,2]$ and $U^{(\ell)}(x)\ll_\ell 1$.
Applying (1.4) and choosing
\bna
Q=\sqrt{\frac{N}{KM_1}}
\ena
we get
\bna
\mathcal {S}(N)=\mathcal {S}^+(N)+\mathcal {S}^-(N),
\ena
where
\bna
\mathcal {S}^{\pm}(N)&=&\frac{1}{K}\int_{\mathbb{R}}\int_0^1
V\left(\frac{v}{K}\right)
\sum_{n=1}^{\infty}
\lambda(1,n)n^{iv}V\left(\frac{n}{N}\right)
\sum_{m\in \mathbb{Z} \atop M_1|n-m}
\chi(m)m^{-i(t+v)}U\left(\frac{m}{N}\right)\\
&&\sum_{1\leq q\leq Q}\sum_{Q<a\leq q+Q\atop (a,q)=1}
\frac{1}{aq}e\left(\pm\frac{\overline{a}(n-m)}{qM_1}\mp\frac{(n-m)\zeta}{aqM_1}\right)
\mathrm{d}v\mathrm{d}\zeta.
\ena

In the rest of the paper we will estimate $\mathcal {S}^+(N)$ (the same analysis
holds for $\mathcal {S}^-(N)$).
Denote by $\mathcal {S}^{\flat}(N)$ and $\mathcal {S}^{\sharp}(N)$ the contribution
to $\mathcal {S}^+(N)$ from $M_1|q$ and $(M_1,q)=1$, respectively.
Then Theorems 1 and 2 follow from (1.2), (1.3) and the following propositions.

\begin{proposition}
Assume $K<\min\left\{t,NM_1/M^2\right\}(Mt)^{-\varepsilon}$.
Then we have
\bna
\mathcal {S}^{\flat}(N)\ll N\sqrt{M t}/M_1^{3/2}.
\ena

\end{proposition}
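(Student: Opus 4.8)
The plan is to use the divisibilities $M_1\mid q$ and $M_1\mid(n-m)$ to collapse the conductor of the exponential, then to apply Poisson summation in $m$ and estimate the remaining sum over $n$ trivially; the whole gain comes from the fact that, after Poisson, only a thin set of pairs $(a,\widetilde m)$ actually contributes.

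First I would write $q=M_1q_0$, so that $1\le q_0\le Q/M_1$ (in particular $\mathcal S^{\flat}(N)$ vanishes unless $M_1\le Q$), and substitute $n-m=M_1\ell$. Then $\overline a(n-m)/(qM_1)=\overline a\ell/(M_1q_0)$ and $(n-m)\zeta/(aqM_1)=\ell\zeta/(aM_1q_0)$, so the additive conductor drops from $\asymp QM_1$ to $\asymp M_1q_0$, and since $m\equiv n\pmod{M_1}$ we have $\chi(m)=\chi_1(n)\chi_2(m)$, which separates off the $M_1$-part of the character. Next I would apply Poisson summation to the $\ell$-sum (equivalently the $m$-sum), whose arithmetic modulus is $\asymp Mq_0$, coming from $\chi_2$ modulo $M_2$ and the additive character modulo $M_1q_0$. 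Splitting the complete character sum by the Chinese Remainder Theorem produces a Gauss sum modulo $M_2$ (of size $\sqrt{M_2}$) times complete additive sums modulo $M_1$ and $q_0$ (of sizes $M_1$ and $q_0$), and it vanishes unless $\widetilde m\equiv-\overline aM_2\pmod{M_1q_0}$ with $M_2\nmid\widetilde m$. The accompanying oscillatory integral is $\ll N/(M_1\sqrt t)$ by stationary phase, and its support confines $\widetilde m$ to an interval of length $\ll M_1Mq_0t/N$, which is shorter than the modulus $M_1q_0$ because $N>(Mt)^{3/2-2\delta}>Mt$ in the range under consideration, and whose centre depends on $a$ only through a shift $\ll M_2/Q$. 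Finally, the $M_2$-part of the new additive phase in $n$ cancels against the $M_2$-phase coming from the Gauss sum, so the surviving sum over $n$ carries only $\chi_1$ modulo $M_1$, an additive twist of modulus $\ll M_1^{2}q_0$, the factor $n^{iv}$, and a harmless $\zeta$-dependent phase; it is therefore $\ll N(Mt)^{\varepsilon}$ by Cauchy's inequality and the Rankin--Selberg bound $\sum_{n\le x}|\lambda(1,n)|^{2}\ll_{\pi}x$.

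It then remains to count the contributing $(a,\widetilde m)$ and to reassemble. For each $a$ there is at most one admissible $\widetilde m$, and the congruence $\widetilde m\equiv-\overline aM_2\pmod{M_1q_0}$ makes $a\mapsto\widetilde m$ injective on the relevant residues; hence, for fixed $q_0$, the number of contributing pairs is at most the number of integers lying in the union over $a$ of the stationary-phase windows, namely $\ll(Mt)^{\varepsilon}\bigl(1+M_1Mq_0t/N+M_2/Q\bigr)$. Collecting the bounds — $\sum_{Q<a\le q+Q,\,(a,q)=1}(aM_1q_0)^{-1}\ll(QM_1q_0)^{-1}\cdot\#\{\text{admissible }a\}$, then summing over $q_0\le Q/M_1$, with $\frac1K\int\ll1$ for the $v$-integral and $\int_0^1\ll1$ for the $\zeta$-integral — produces three contributions to $\mathcal S^{\flat}(N)$: the term $M_1Mq_0t/N$ already yields the desired $N\sqrt{Mt}/M_1^{3/2}$; the term $M_2/Q$ is $\ll N\sqrt{Mt}/M_1^{3/2}$ exactly because $K<t$; and the residual $1$ is $\ll N\sqrt{Mt}/M_1^{3/2}$ exactly because $K<NM_1/M^{2}$. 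Altogether this gives $\mathcal S^{\flat}(N)\ll N\sqrt{Mt}/M_1^{3/2}$.

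The main obstacle is this last counting step: one must honestly bound the number of contributing $(a,\widetilde m)$ — in particular keep track of the $a$-dependent shift $\ll M_2/Q$ of the stationary point, which is what brings in the second constraint on $K$ — and then check that each of the three resulting pieces is dominated by the target, which is precisely what forces the hypothesis $K<\min\{t,NM_1/M^{2}\}(Mt)^{-\varepsilon}$. The rest is routine, modulo bookkeeping of the coprimality sub-cases (for instance when $q_0$ shares a prime with $M_1$ or $M_2$), all of which turn out to be at least as favourable.
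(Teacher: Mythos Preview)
Your approach matches the paper's: Poisson summation on the $m$-sum, evaluation of the resulting character sum to extract the congruence linking $a$ and the dual variable, the second-derivative bound $U^{\dag}\ll t^{-1/2}$ for the oscillatory integral, and a trivial Rankin--Selberg estimate for the $n$-sum. Your pre-factorisation $\chi(m)=\chi_1(n)\chi_2(m)$ is a neat simplification that the paper omits (it carries $\chi_1$ through the Poisson modulus, at the cost of an extra factor of $M_1$ that cancels in the end).

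There is, however, a slip in your final counting. The contribution of the residual ``$1$'' in your count $1+M_1Mq_0t/N+M_2/Q$ to $\mathcal S^{\flat}(N)$ is
\[
\frac{N^2}{M_1\sqrt{M_2 t}}\sum_{q_0\le Q/M_1}\frac{1}{QM_1 q_0}
\;\ll\;\frac{N^{3/2}K^{1/2}}{M_1^{3/2}\sqrt{M_2 t}}(Mt)^{\varepsilon},
\]
and this is $\ll N\sqrt{Mt}/M_1^{3/2}$ precisely when $K\ll (Mt)^2/(NM_1)$, \emph{not} when $K<NM_1/M^2$ as you assert; these two conditions are genuinely different (they coincide only when $NM_1\asymp M^2t$), and the former is not part of the hypothesis of Proposition~1. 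The fix is simply to count the other way round, as the paper does: for each nonzero dual frequency $\widetilde m$ with $1\le|\widetilde m|\le q_0M_1(Mt)^{1+\varepsilon}/N$, the congruence determines a unique $a\in(Q,q_0M_1+Q]$, so the number of contributing pairs is $\ll q_0M_1Mt/N$ with no additive ``$+1$'' (the range is simply empty when this bound falls below $1$). This yields the target directly. The hypothesis $K<NM_1/M^2$ is then used only to guarantee $Q\gg M_2$, which is exactly what makes the $\zeta$-dependent shift $\ll M_2/Q$ of the stationary window negligible against $1$ (so the window is effectively $a$-independent); and $K<t$ enters through $t+Kv\asymp t$ in the second-derivative bound.
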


\begin{proposition}
Assume $(M t)^{6/5}/(NM_1)^{3/5}\leq
K<\min\left\{t,(Mt)^2/NM_1,NM_1/M^2\right\}(Mt)^{-\varepsilon}$.
Then we have
\bna
\mathcal {S}^{\sharp}(N)\ll \left\{\begin{array}{ll}
N^{5/8}(M t)^{1/2},
&\mbox{if} \quad (M t)^{24/17}
M_1^{8/17}<N\leq (M t)^{3/2+\varepsilon},\\
\\
N^{1/5}(M t)^{11/10}M_1^{1/5}&\mbox{if}
\quad N\leq (M t)^{24/17}
M_1^{8/17}.\\
\end{array}
\right.
\ena

\end{proposition}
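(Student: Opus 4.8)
The plan is to follow the treatment of \cite{Munshi31} and \cite{Munshi32}: dualise the $n$-sum by the $GL(3)$ Voronoi formula and the $m$-sum by Poisson summation, apply Cauchy--Schwarz to remove the coefficients $\lambda(1,n)$, apply Poisson summation a second time, estimate the resulting complete exponential sums, and finally optimise over the conductor-lowering parameter $K$. Since we are inside $\mathcal{S}^{\sharp}(N)$ we have $(q,M_1)=1$, and the congruence $M_1\mid n-m$ forces $\chi_1(m)=\chi_1(n)$, so $\chi_1$ may be moved onto the $n$-variable. First, for fixed $v,\zeta,q,a$, apply Poisson summation to the sum over $m\equiv n\ (\mathrm{mod}\ M_1)$ with smooth part $\chi_2(m)\,m^{-i(t+v)}\,e(\mp\overline{M_1}\,\overline a\,m/q)\,e(\pm m\zeta/(aqM_1))\,U(m/N)$. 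The relevant modulus is $M_1M_2q$; the complete sum splits by the Chinese remainder theorem into a single residue class modulo $M_1$ (which becomes a phase in $n$), a Gauss sum of $\chi_2$ modulo $M_2$, and the congruence $\widetilde m\equiv\pm\overline a\,M_2\ (\mathrm{mod}\ q)$, while integration by parts confines the dual variable to $\widetilde m\ll (M_1M_2qt/N)(Mt)^{\varepsilon}$; here the hypotheses $K<t$ and $K<(Mt)^2/(NM_1)(Mt)^{-\varepsilon}$ ensure $t+v\asymp t$ and that this dual sum is nontrivial. What remains is the $n$-sum $\sum_n\lambda(1,n)\chi_1(n)\,n^{iv}V(n/N)\,e(\gamma n/(M_1M_2q))$; opening $\chi_1$ into additive characters modulo $M_1$ and invoking the $GL(3)$ Voronoi formula with modulus $M_1M_2q$ yields a sum over $n_2\mid M_1M_2q$ and $n_1$ with $n_1n_2^2\ll (M_1M_2q)^3K^3N^{-2}(Mt)^{\varepsilon}$, a Kloosterman sum modulo $q$ times a $\chi_1$-twisted Kloosterman sum modulo $M_1$, and a Hankel-type transform of $V(\cdot/N)(\cdot)^{iv}$; the hypothesis $K<NM_1/M^2(Mt)^{-\varepsilon}$ makes this step genuinely shorten the sum.

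Next, pull $n_2$ outside, apply Cauchy--Schwarz in $n_1$, and use the Rankin--Selberg estimate $\sum_{n_1\le X}|\lambda(n_2,n_1)|^2\ll (n_2X)(Mt)^{\varepsilon}$ with $X\asymp (M_1M_2q)^3K^3N^{-2}$. This reduces matters to bounding a second moment $\Omega=\sum_{n_1}\Phi(n_1/X)\,\bigl|\sum_{q,a,\widetilde m}(\cdots)\bigr|^2$ for a fixed bump $\Phi$. Opening the square and applying Poisson summation to the $n_1$-sum, with a modulus dividing $M_1M_2q_1q_2$ and a dual variable $\widetilde n_1\ll (M_1M_2q_1q_2/X)(Mt)^{\varepsilon}$, leaves a complete character sum. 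The term $\widetilde n_1=0$ forces $q_1=q_2$ (and collapses the attached congruences) and produces the diagonal contribution; for $\widetilde n_1\ne 0$ the complete sum factors into its mod-$q$ and mod-$M_1$ parts, each of Kloosterman type, and the Weil bound gives square-root cancellation outside a sparse degenerate locus that is handled trivially --- one separates the thin family $M_2\mid q$ at this point as well.

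Collecting everything, inserting the savings from the $v$-integral of length $K$ (the conductor-lowering gain, extracted by stationary phase or integration by parts) and from the $\zeta$-integral (which localises the variable $a$), and using $Q=\sqrt{N/(KM_1)}$, one obtains an estimate of the shape $\mathcal{S}^{\sharp}(N)\ll (\mathrm{diag})^{1/2}+(\mathrm{offdiag})^{1/2}$, each summand a monomial in $N,M_1,M,t,K$. Optimising over $K$ in the admissible range $(Mt)^{6/5}(NM_1)^{-3/5}\le K<\min\{t,(Mt)^2/(NM_1),NM_1/M^2\}(Mt)^{-\varepsilon}$ then splits into two cases: for $N>(Mt)^{24/17}M_1^{8/17}$ the balancing value of $K$ lies in the range and yields $\mathcal{S}^{\sharp}(N)\ll N^{5/8}(Mt)^{1/2}$, whereas for $N\le (Mt)^{24/17}M_1^{8/17}$ this value drops below the lower threshold, forcing $K=(Mt)^{6/5}(NM_1)^{-3/5}$ and hence $\mathcal{S}^{\sharp}(N)\ll N^{1/5}(Mt)^{11/10}M_1^{1/5}$; the two bounds coincide at the break point $N=(Mt)^{24/17}M_1^{8/17}$.

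The hard part is this final block of steps. After the second Poisson summation one must genuinely extract square-root cancellation from the complete exponential sum when $\widetilde n_1\ne 0$: the mod-$M_1$ factor is a Weil-type sum twisted by $\chi_1$, and its degenerate configurations --- when the governing rational phase develops a repeated root, or when $\chi_1$ resonates with the additive character --- must be isolated and shown to be negligible. At the same time one must keep exact control of the several entangled archimedean integrals (over $v$, over $\zeta$, and over the Mellin and Hankel variables), so that the $v$-integral of length $K$ really delivers the full conductor-lowering saving and the $\zeta$-integral really localises the variables $a$ and $q$. It is the interplay between these requirements --- cancellation in the arithmetic sum versus enough surviving oscillation in the archimedean integrals --- that pins down the precise admissible window for $K$ and thereby forces the case division in the statement.
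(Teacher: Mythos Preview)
Your strategy coincides with the paper's: Poisson in $m$, $GL(3)$ Voronoi in $n$, Cauchy--Schwarz to strip the Fourier coefficients, a second Poisson in the dual variable, Weil bounds for the resulting complete sums (the paper's Lemma~11), stationary-phase analysis of the nested archimedean integrals, and the identical two-case optimisation in $K$ producing the breakpoint $N=(Mt)^{24/17}M_1^{8/17}$. Your device of transferring $\chi_1$ from $m$ to $n$ via $M_1\mid n-m$ is equivalent to the paper's detection of that congruence by additive characters (its split $\mathcal S_0+\mathcal S_1$).

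One correctable error: after Poisson on $m$ the additive phase attached to $n$ has modulus $qM_1$, \emph{not} $qM_1M_2$. The $M_2$-component of the Poisson character sum contributes only the $\chi_2$-Gauss sum and the congruence $\widetilde m\equiv M_2\overline a\pmod q$; it does not feed a phase back into $n$. Consequently the Voronoi formula is applied with modulus $qM_1$, and the dual length is $n_1^2n_2\ll N^{1/2+\varepsilon}K^{3/2}M_1^{3/2}$ (independent of $M_2$), not the $(M_1M_2q)^3K^3N^{-2}$ you wrote. This slip would propagate through every subsequent bound, so fix it before carrying out the bookkeeping. Also, your Rankin--Selberg input should read $\sum_{n_1\le X}|\lambda(n_2,n_1)|^2\ll_\varepsilon (n_2^2X)^{1+\varepsilon}$, not $n_2X$; the paper sidesteps the large-$n_2$ issue by disposing of the case $M_1\mid n_1$ separately at the end of its Section~4.
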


For our purpose we choose the optimal $K$ as
\bea
K=\max\left\{\frac{N^{1/4}}{M_1},\frac{(M t)^{6/5}}{(NM_1)^{3/5}}\right\}.
\eea
Propositions 1 and 2 will be proved by summation formulas of Voronoi's type and
stationary phase method, which are listed in Section 2.

\begin{remark}
With $K$ as in (1.5), one sees that the assumptions for $K$
in Propositions 1 and 2 are fulfilled if $M_1$ is in the range of
Theorem 1 or Theorem 2.
\end{remark}

\begin{remark}
In the appendix of \cite{Munshi31}, Munshi showed that Kloosterman's circle
method with suitable conductor lowering mechanism also works for
$\chi$ with a prime power modulus. For hybrid bounds in $t$ and $M$
aspect, we will study this in a separate paper.
\end{remark}

\noindent
{\bf Notation.}
Throughout the paper, the letters $q$, $m$ and $n$, with or without subscript,
denote integers. The letter $\varepsilon$ is an arbitrarily small
positive constant, not necessarily the same at different occurrences. The symbol
$\ll_{a,b,c}$ denotes that the implied constant depends at most on $a$, $b$ and $c$.
The symbols $q\sim C$ and $q\asymp C$ mean that $C<q\leq 2C$ and $c_1C\leq q\leq c_2C$
for some absolute constants $c_1, c_2$, respectively. Finally, fractional numbers
such as $\frac{ab}{cd}$ will be written as $ab/cd$ and $a/b+c$ or $c+a/b$ means
$\frac{a}{b}+c$.

\section{Voronoi formula and stationary phase method}
\setcounter{equation}{0}
\medskip

\noindent{\bf 2.1 $GL(3)$ cusp forms and Voronoi formula.}
Let $\pi$ be a Hecke-Maass cusp form of type $\nu=(\nu_1,\nu_2)$ for $SL(3,\mathbb{Z})$,
which has a Fourier-Whittaker expansion (see \cite{G}) with Fourier coefficients $\lambda(n_1,n_2)$,
normalized so that $\lambda(1,1)=1$. By Rankin-Selberg theory,
the Fourier coefficients $\lambda(n_1,n_2)$ satisfy
\bea
\mathop{\sum\sum}_{n_1^2n_2\leq x} \left|\lambda(n_1,n_2)\right|^2\ll_{\pi,\varepsilon}
x^{1+\varepsilon}.
\eea
Let
\bna
\mu_1=-\nu_1-2\nu_2+1, \qquad \mu_2=-\nu_1+\nu_2,\qquad  \mu_3=2\nu_1+\nu_2-1.
\ena
The generalized Ramanujan conjecture asserts that $\mathrm{Re}(\mu_j)=0$, $1\leq j\leq 3$,
while the current record bound due to
Luo, Rudnick and Sarnak \cite{LRS} is
$
|\mathrm{Re}(\mu_j)| \leq 1/2-1/10, 1\leq j\leq 3.
$
For $\ell=0,1$ we define
\bna
\gamma_\ell (s)=\frac{1}{2\pi^{3(s+1/2)}}\prod_{j=1}^3
\frac{\Gamma\left((1+s+\mu_j+\ell)/2\right)}
{\Gamma\left((-s-\mu_j+\ell)/2\right)}
\ena
and set $\gamma_{\pm}(s)=\gamma_0(s)\mp i \gamma_1(s)$. Then
for $\sigma\geq -1/2$,
\bea
\gamma_{\pm}(\sigma+i\tau)\ll_{\pi,\sigma}(1+|\tau|)^{3\left(\sigma+1/2\right)},
\eea
and for $|\tau|\gg (Mt)^{\varepsilon}$, we can apply Stirling's formula to get
(see \cite{Munshi32})
\bea
\gamma_{\pm}\left(-\frac{1}{2}+i\tau\right)
=\left(\frac{|\tau|}{e\pi}\right)^{3i\tau}\Psi_{\pm}(\tau),\qquad
\mathrm{where}\qquad
\Psi'_{\pm}(\tau)\ll \frac{1}{|\tau|}.
\eea

Let $\phi(x)$ be a smooth function compactly supported on $(0,\infty)$ and
denote by $\widetilde{\phi}(s)$
the Mellin transform of $\phi(x)$.
Let
\bna
\Phi_{\phi}^{\pm}\left(x\right)
=\frac{1}{2\pi i}\int\limits_{(\sigma)}x^{-s}
\gamma_{\pm}(s)\widetilde{\phi}(-s)\mathrm{d}s,
\ena
where $\sigma>\max\limits_{1\leq j\leq 3}\{-1-\mathrm{Re}(\mu_j)\}$.
Then we have the following Voronoi-type formula (see \cite{GL1}, \cite{MS}).

\medskip

\begin{lemma}
Suppose that $\phi(x) \in C_c^\infty(0,\infty)$. Let
$a,q\in \mathbb{Z}$ with $q\geq 1$, $(a,q)=1$ and $a \overline{a} \equiv 1(\bmod q)$. Then
\bna
\sum_{n=1}^{\infty}\lambda(1,n)e\left(\frac{an}{q}\right)\phi(n)=
q\sum_{\pm}\sum_{n_1|q}
\sum_{n_2=1}^{\infty}
\frac{\lambda(n_2,n_1)}{n_1n_2}S\left(\overline{a},\pm n_2;\frac{q}{n_1}\right)
\Phi_{\phi}^{\pm}\left(\frac{n_1^2n_2}{q^3}\right),
\ena
where $S(m,n;c)$ is the classical Kloosterman sum.
\end{lemma}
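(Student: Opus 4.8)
The plan is to derive the summation formula from the functional equation of the additively twisted $L$-function attached to $\pi$, following the classical Voronoi-oscillation argument used by Miller--Schmid and by Goldfeld--Li. First I would express the left-hand side by Mellin inversion: writing $\phi(x)=\frac{1}{2\pi i}\int_{(\sigma_0)}\widetilde{\phi}(s)x^{-s}\,\mathrm{d}s$ with $\sigma_0$ large, we get
\[
\sum_{n=1}^{\infty}\lambda(1,n)e\!\left(\frac{an}{q}\right)\phi(n)
=\frac{1}{2\pi i}\int_{(\sigma_0)}\widetilde{\phi}(s)
\left(\sum_{n=1}^{\infty}\lambda(1,n)e\!\left(\frac{an}{q}\right)n^{-s}\right)\mathrm{d}s.
\]
The inner Dirichlet series is (up to normalization) the additively twisted $L$-function $L(s,\pi,a/q)$, which by Rankin--Selberg (2.1) converges absolutely for $\mathrm{Re}(s)>1$, so the interchange is justified there.

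Next I would invoke the known analytic continuation and functional equation for $L(s,\pi,a/q)$. This is the technical heart of the matter: $\pi$ on $GL(3)$ has a Voronoi/Miller--Schmid functional equation relating $L(s,\pi,a/q)$ to a dual sum over $n_2$ with the Fourier coefficients $\lambda(n_2,n_1)$ (for $n_1\mid q$), Kloosterman sums $S(\overline{a},\pm n_2;q/n_1)$, and the archimedean factors $\gamma_{\pm}(s)$ built from the $\Gamma$-factors of $\pi$. I would quote this from \cite{GL1} and \cite{MS} rather than reprove it. Shifting the contour to $\mathrm{Re}(s)=-\sigma$ with $\sigma>\max_j\{-1-\mathrm{Re}(\mu_j)\}$ (no poles are crossed since $\pi$ is cuspidal, so $L(s,\pi,a/q)$ is entire), applying the functional equation on the shifted line, and substituting $s\mapsto -s$ converts the integral into
\[
q\sum_{\pm}\sum_{n_1\mid q}\sum_{n_2=1}^{\infty}
\frac{\lambda(n_2,n_1)}{n_1n_2}\,S\!\left(\overline{a},\pm n_2;\frac{q}{n_1}\right)
\cdot\frac{1}{2\pi i}\int_{(\sigma)}\left(\frac{n_1^2n_2}{q^3}\right)^{-s}\gamma_{\pm}(s)\,\widetilde{\phi}(-s)\,\mathrm{d}s,
\]
and the inner integral is precisely $\Phi_{\phi}^{\pm}(n_1^2n_2/q^3)$ by definition. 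Collecting terms gives the stated identity.

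The main obstacle is justifying the term-by-term manipulation on the dual side: after the functional equation one must check that the dual series $\sum_{n_2}\lambda(n_2,n_1)n_2^{-1}S(\overline{a},\pm n_2;q/n_1)(\cdots)^{-s}$ converges in the relevant vertical strip, which requires the polynomial growth bound (2.2) on $\gamma_{\pm}$ together with the rapid decay of $\widetilde{\phi}(-s)$ coming from $\phi\in C_c^\infty(0,\infty)$ (integration by parts in the Mellin transform) and the average bound (2.1) for $\lambda$. Since $\widetilde{\phi}(-s)$ decays faster than any polynomial in $|\mathrm{Im}(s)|$ on any fixed vertical line while $\gamma_{\pm}$ grows only polynomially, absolute convergence and the interchange of sum and integral are legitimate, and the proof is complete. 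Everything else is bookkeeping with the Mellin transform and the explicit shape of the $GL(3)$ Gamma factors.
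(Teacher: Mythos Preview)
Your proposal is correct and follows the standard derivation via Mellin inversion and the functional equation of the additively twisted $L$-function, as in \cite{GL1} and \cite{MS}. Note, however, that the paper does not prove this lemma at all: it is simply quoted from those references without argument, so your sketch in fact supplies more detail than the paper does.
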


\medskip

\noindent{\bf 2.2 Exponential integral and stationary phase method.}

Here we collect relevant results from \cite{BHY}, \cite{Hux}, \cite{Munshi32} and \cite{P}
that will be used to estimate
some exponential integrals in this paper. First we need the stationary phase estimates from
\cite{Hux} which will be used to derive asymptotic expansion of the exponential integral
\bna
\mathcal {I}=\int_a^b g(v)e(f(v))\mathrm{d}v,
\ena
where $f$, $g$ are smooth real
valued functions and $\mathrm{Supp}(g)\subset [a,b]$.

\begin{lemma}
Assume that $\Theta_f, \Omega_f\gg b-a$ and
\bea
f^{(i)}(v)\ll \Theta_f\Omega_f^{-i}, \qquad g^{(j)}(v)\ll \Omega_g^{-j}
\eea
for $i=2,3$ and $j=0,1,2$.

(1) Suppose $f'$ and $f''$ do not vanish in $[a,b]$.
Let $\Lambda=\min\limits_{[a,b]}|f'(v)|$. Then we have
\bna
\mathcal {I}\ll \frac{\Theta_f}{\Omega_f^2\Lambda^3}\left(1+\frac{\Omega_f}{\Omega_g}
+\frac{\Omega_f^2}{\Omega_g^2}\frac{\Lambda}{\Theta_f/\Omega_f}\right).
\ena

(2) Suppose $f'$ changes sign from negative to positive at the unique point
$v_0\in (a,b)$. Let $\kappa=\min\{b-v_0,v_0-a\}$. Further suppose
(2.4) holds for $i=4$ and
\bna
f^{(2)}(v)\gg \Theta_f/\Omega_f^2.
\ena
Then
\bna
\mathcal {I}=\frac{g(v_0)e\left(f(v_0)+1/8\right)}{\sqrt{f''(v_0)}}
+O\left(\frac{\Omega_f^4}{\Theta_f^2\kappa^3}+
\frac{\Omega_f}{\Theta_f^{3/2}}
+\frac{\Omega_f^3}{\Theta_f^{3/2}\Omega_g^2}\right).
\ena
\end{lemma}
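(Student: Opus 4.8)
Both parts belong to the classical theory of one-dimensional exponential integrals: part~(1) is a non-stationary phase (van der Corput--type) bound and part~(2) is the method of stationary phase with an explicit error term. Since the statement is quoted from \cite{Hux}, the plan is to reproduce the standard arguments, the only real work being the bookkeeping needed to put the error terms in exactly the stated shape. Throughout one uses freely that $b-a\ll\min\{\Theta_f,\Omega_f\}$, which follows from $\Theta_f,\Omega_f\gg b-a$.

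For part~(1), since neither $f'$ nor $f''$ vanishes on $[a,b]$, the derivative $f'$ is monotonic with $|f'|\geq\Lambda>0$ throughout. First I would write $e(f(v))=\frac{1}{2\pi i\, f'(v)}\,\frac{\mathrm{d}}{\mathrm{d}v}e(f(v))$ and integrate by parts, using the operator $\mathcal D h:=\frac{1}{2\pi i}\frac{\mathrm{d}}{\mathrm{d}v}\big(h/f'\big)$, so that $\mathcal I=\int_a^b(\mathcal D^{k}g)(v)\,e(f(v))\,\mathrm{d}v$ for every $k\geq 1$, the boundary terms vanishing because $g$ is supported inside $[a,b]$. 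Expanding $\mathcal D^{k}g$ by the Leibniz and chain rules gives a sum of terms $g^{(\alpha)}\prod_i f^{(\beta_i)}/(f')^{\gamma}$; inserting the hypotheses $f^{(i)}\ll\Theta_f\Omega_f^{-i}$ ($i=2,3$), $g^{(j)}\ll\Omega_g^{-j}$ ($j=0,1,2$), together with $|f'|\geq\Lambda$ and $b-a\ll\Omega_f$, and then dominating the resulting monomials in $\Lambda,\Theta_f,\Omega_f,\Omega_g$ against one another (using the monotonicity of $f'$ and the range of $|f'|$ to absorb the $(f'')^2/(f')^3$-type pieces), one reaches after a bounded number of integrations by parts the bound
\[
\mathcal I\ll\frac{\Theta_f}{\Omega_f^2\Lambda^3}\Big(1+\frac{\Omega_f}{\Omega_g}+\frac{\Omega_f^2}{\Omega_g^2}\,\frac{\Lambda}{\Theta_f/\Omega_f}\Big).
\]

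For part~(2), let $\eta$ be a scale with $\eta\leq\kappa$ to be chosen, and split $\mathcal I=\mathcal I_{\mathrm{near}}+\mathcal I_{\mathrm{far}}$ according to $|v-v_0|\leq\eta$ or $|v-v_0|>\eta$. On the far part $f'$ is monotonic and, since $f^{(2)}\gg\Theta_f/\Omega_f^2$, satisfies $|f'(v)|\gg(\Theta_f/\Omega_f^2)|v-v_0|$; after a dyadic decomposition into ranges $|v-v_0|\asymp R$ with $\eta\leq R\leq\kappa$, part~(1) applies on each range with $\Lambda\asymp(\Theta_f/\Omega_f^2)R$, and summing over $R$ (the outermost range $R\asymp\kappa$ producing the term $\Omega_f^4/(\Theta_f^2\kappa^3)$) shows that $\mathcal I_{\mathrm{far}}$ is absorbed into the claimed error. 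On the near part I would Taylor expand $f(v)=f(v_0)+\tfrac12 f''(v_0)(v-v_0)^2+O\big(|f'''|\,|v-v_0|^3\big)$ and $g(v)=g(v_0)+O\big(|g'|\,|v-v_0|\big)$, substitute $u=(v-v_0)\sqrt{f''(v_0)}$ to turn the main contribution into $\frac{g(v_0)}{\sqrt{f''(v_0)}}e(f(v_0))\int e(u^2/2)\,\mathrm{d}u$, extend the Fresnel integral to the whole line (where it equals $e(1/8)$), and collect the errors coming from the tails, from the cubic Taylor remainder of $f$ (this is where the hypotheses (2.4) for $i=4$ and $f^{(2)}\gg\Theta_f/\Omega_f^2$ are used), and from the linear remainder of $g$, optimizing by taking $\eta$ of order $\Omega_f/\sqrt{\Theta_f}$.

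The hard part will not be any single estimate but the bookkeeping: in part~(1), verifying that all the monomials thrown up by the repeated integration by parts collapse into the clean three-term expression; and in part~(2), choosing $\eta$ and organizing the dyadic pieces of the far range so that the total error is precisely $O\big(\Omega_f^4/(\Theta_f^2\kappa^3)+\Omega_f/\Theta_f^{3/2}+\Omega_f^3/(\Theta_f^{3/2}\Omega_g^2)\big)$ and no worse. This is the Atkinson--Huxley error analysis, carried out in full in \cite{Hux}.
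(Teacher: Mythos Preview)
The paper does not supply its own proof of this lemma: it is stated as a quotation from Huxley \cite{Hux} (see the sentence preceding the lemma, ``we collect relevant results from \cite{BHY}, \cite{Hux}, \cite{Munshi32} and \cite{P}''), so there is nothing in the paper to compare your argument against. Your sketch is the standard van der Corput/Atkinson--Huxley treatment and is the correct route; the detailed bookkeeping you flag as the hard part is precisely what is carried out in \cite{Hux}.
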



For the special exponential integral
\bna
U^\dag\left(r,s\right)=\int_0^{\infty}U(x)e(-rx)x^{s-1}\mathrm{d}x
\ena
where $U$ is a smooth real valued function with
$\mathrm{Supp}(U)\subset [a,b]\subset (0,\infty)$,
we quote the following result from \cite{Munshi32}.
\begin{lemma}
Suppose $U^{(j)}(x)\ll_{a,b,j} 1$.
Let $r\in \mathbb{R}$ and $s=\sigma+i\beta\in \mathbb{C}$. We have
\bea
U^\dag\left(r,s\right)=\frac{\sqrt{2\pi}e\left(1/8\right)}{\sqrt{-\beta}}
U\left(\frac{\beta}{2\pi r}\right)\left(\frac{\beta}{2\pi r}\right)^{\sigma}
\left(\frac{\beta}{2\pi er}\right)^{i\beta}+
O\left(\min\{|\beta|^{-3/2},|r|^{-3/2}\}\right),
\eea
where the implied constant depends only on $a$, $b$ and $\sigma$.
We also have
\bea
U^\dag\left(r,s\right)\ll_{a,b,\sigma,j}
\min\left\{\left(\frac{1+|\beta|}{|r|}\right)^j,\left(\frac{1+|r|}{|\beta|}\right)^j\right\}.
\eea
\end{lemma}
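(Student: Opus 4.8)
The plan is to realise $U^\dag(r,s)$ as an exponential integral and to estimate it by integration by parts for the bound (2.7) and by stationary phase (Lemma 2) for the asymptotic (2.6). Write $s=\sigma+i\beta$ with $\sigma,\beta$ real; then on $\mathrm{Supp}(U)$
\[
U^\dag(r,s)=\int_0^\infty g(x)e(f(x))\,\mathrm{d}x,\qquad g(x)=U(x)x^{\sigma-1},\qquad f(x)=-rx+\frac{\beta}{2\pi}\log x,
\]
where $g$ is real, smooth, supported in a fixed compact subset of $(0,\infty)$ with $g^{(j)}\ll_{a,b,\sigma,j}1$. Here $f'(x)=-r+\beta/(2\pi x)$ vanishes on $(0,\infty)$ exactly when $r,\beta$ have the same sign, at $x_0=\beta/(2\pi r)$, and $f^{(k)}(x)=(-1)^k(k-1)!\beta/(2\pi x^k)$, so on $\mathrm{Supp}(U)$ one has $|f^{(k)}(x)|\asymp_{a,b,k}|\beta|$ for $k\ge2$; moreover $f''(x_0)=-2\pi r^2/\beta$ and $f(x_0)=\tfrac{\beta}{2\pi}\log\tfrac{\beta}{2\pi e r}$.

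For (2.7): if $|\beta|\asymp|r|$ (in particular whenever $x_0\in\mathrm{Supp}(U)$) both sides of the claimed inequality are $\gg1$, so it is trivial. Otherwise the larger of $|\beta|,|r|$ governs the oscillation: integrating by parts $j$ times using $e(-rx)=\frac{-1}{2\pi ir}\frac{\mathrm d}{\mathrm dx}e(-rx)$ (moving each derivative onto $U(x)x^{s-1}$, cost $O(1+|\beta|)$, gain $1/|r|$) gives $\ll((1+|\beta|)/|r|)^j$, while integrating by parts $j$ times using $x^{i\beta}=\frac{x}{i\beta}\frac{\mathrm d}{\mathrm dx}x^{i\beta}$ (moving each derivative onto $U(x)x^{\sigma}e(-rx)$, cost $O(1+|r|)$, gain $1/|\beta|$) gives $\ll((1+|r|)/|\beta|)^j$; the minimum is (2.7).

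For (2.6): fix a compact interval $[a,b]\subset(0,\infty)$ with $\mathrm{Supp}(U)$ in its interior. If $x_0\notin(a,b)$ then $\mathrm{dist}(x_0,\mathrm{Supp}(U))\gg_{a,b}1$, so $|f'|\gg_{a,b}\max\{|\beta|,|r|\}$ on $\mathrm{Supp}(U)$, and two integrations by parts against $e(f(x))/f'(x)$ give $U^\dag(r,s)\ll\min\{|\beta|,|r|\}^{-2}\ll\min\{|\beta|^{-3/2},|r|^{-3/2}\}$; since $x_0\notin\mathrm{Supp}(U)$, the main term $U(\beta/2\pi r)(\cdots)$ vanishes, so this is (2.6). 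If $x_0\in(a,b)$ then $|\beta|\asymp|r|$, and (replacing $U^\dag(r,s)$ by its conjugate $\int g(x)e(-f(x))\,\mathrm dx$ when $\beta>0$, so that the stationary point is a minimum of the phase) Lemma 2(2) applies with $\Theta_f\asymp|\beta|$, $\Omega_f\asymp\Omega_g\asymp1$, $\kappa\asymp1$ — the hypotheses being the derivative bounds above, and we may assume $|\beta|\gg1$ — yielding
\[
U^\dag(r,s)=\frac{g(x_0)e\!\left(f(x_0)+1/8\right)}{\sqrt{f''(x_0)}}+O\!\left(|\beta|^{-3/2}\right)
\]
with the principal branch of the square root. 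If $x_0\notin\mathrm{Supp}(U)$ the main term again vanishes and we are done; if $x_0\in\mathrm{Supp}(U)$, substituting the expressions for $f(x_0)$, $f''(x_0)$ and $x_0^{\sigma-1}=(\beta/2\pi r)^{\sigma}(2\pi r/\beta)$ and collecting factors — tracking the branch so that, e.g. for $\beta>0$, $\sqrt{-\beta}=i\sqrt\beta$ and $e(1/8)/\sqrt{-\beta}=e(-1/8)/\sqrt\beta$ reproduces the Fresnel phase attached to $f''(x_0)<0$ — turns the main term into exactly $\tfrac{\sqrt{2\pi}\,e(1/8)}{\sqrt{-\beta}}U(\tfrac{\beta}{2\pi r})(\tfrac{\beta}{2\pi r})^{\sigma}(\tfrac{\beta}{2\pi e r})^{i\beta}$, with error $O(|\beta|^{-3/2})=O(\min\{|\beta|^{-3/2},|r|^{-3/2}\})$, which is (2.6).

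Each estimate above is routine (stationary phase, or integration by parts in a range where one exponential dominates); the only real obstacle is the bookkeeping — fixing the precise constant $e(1/8)$ and the branch of $\sqrt{-\beta}$ in the leading term, and stitching the stationary regime $|\beta|\asymp|r|$ together with the two non-stationary regimes so that a single error term $O(\min\{|\beta|^{-3/2},|r|^{-3/2}\})$ holds uniformly in $r,\beta$.
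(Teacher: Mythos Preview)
The paper does not prove this lemma; it is quoted from \cite{Munshi32}. Your argument is the standard one (and presumably what Munshi does): integration by parts for the second displayed bound, and stationary phase via Lemma~2(2) for the asymptotic formula. So there is nothing to compare against, and your approach is the right one.

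One slip to fix. In the non-stationary case you correctly obtain $|f'|\gg\max\{|\beta|,|r|\}$ on $\mathrm{Supp}(U)$, but then write that two integrations by parts give $U^\dag\ll\min\{|\beta|,|r|\}^{-2}\ll\min\{|\beta|^{-3/2},|r|^{-3/2}\}$. Read as $(\min\{|\beta|,|r|\})^{-2}$, the last inequality is false in general (take $|\beta|=1$, $|r|$ large). What integration by parts actually delivers from $|f'|\gg\max\{|\beta|,|r|\}$ is $U^\dag\ll(\max\{|\beta|,|r|\})^{-2}$, and this \emph{is} $\ll\max\{|\beta|,|r|\}^{-3/2}=\min\{|\beta|^{-3/2},|r|^{-3/2}\}$ (the regime $\max\{|\beta|,|r|\}\ll1$ being trivial since $U^\dag\ll1$). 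With $\min$ replaced by $\max$ in that intermediate step the argument goes through. The remaining points---conjugating when $\beta>0$ so that Lemma~2(2) applies with $f''(x_0)>0$, matching the leading constant and the branch of $\sqrt{-\beta}$, and absorbing the range $|\beta|\ll1$ into the error---are indeed routine bookkeeping. (A harmless sign typo: $f^{(k)}(x)=(-1)^{k-1}(k-1)!\,\beta/(2\pi x^{k})$ for $k\ge2$, not $(-1)^{k}$.)
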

In applications, the $O$-term in (2.5) is not essential.
For our purpose, we will also use the following more precise asymptotic
expansion to simplify computation (see \cite{BHY}, Proposition 8.2). For a proof, see also \cite{P}.

\begin{lemma}
Let $r\in \mathbb{R}$ and $s=\sigma+i\beta\in \mathbb{C}$ such that
$x_0=\beta/(2\pi r)\in [a/2,2b]$. Then we have
\bea
U^{\dag}(r,s)=\frac{\sqrt{2\pi}e\left(1/8\right)}{\sqrt{-\beta}}
U^*\left(\frac{\beta}{2\pi r}\right)\left(\frac{\beta}{2\pi r}\right)^{\sigma}
\left(\frac{\beta}{2\pi er}\right)^{i\beta}+
O\left(\min\{|\beta|^{-5/2},|r|^{-5/2}\}\right),
\eea
where $
U^*(x_0)=x_0^{1-\sigma}\sum_{n=0}^5p_n(x_0)$
and
\bna
p_n(x_0)=\frac{1}{n!}\left(\frac{i}{2h''(x_0)}\right)^nG^{(2n)}(x_0).
\ena
Here $h(x)=-2\pi rx+\beta \log x$, $G(x)=U(x)x^{\sigma-1}e^{iH(x)}$ and
\bna
H(x)=h(x)-h(x_0)-\frac{1}{2!}h''(x_0)(x-x_0)^2.
\ena
Moreover, $G^{(2n)}(x_0)$ is a linear combination of terms of the form
$(U(x)x^{\sigma-1})^{(\ell_0)}|_{x=x_0}H^{(\ell_1)}(x_0)\cdot\cdot\cdot H^{(\ell_i)}(x_0)$,
where $\ell_0+\ell_1+\cdot\cdot\cdot+\ell_i=2n$, so that $
U^{*(\ell)}(x_0)\ll_{\sigma,a,b,\ell} 1.$
\end{lemma}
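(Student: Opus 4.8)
The plan is to regard $U^{\dag}(r,s)$ as an oscillatory integral with a nondegenerate stationary point and to quote the standard stationary-phase expansion with explicit polynomial error (Proposition~8.2 of \cite{BHY}; see also \cite{P}), so that the work is reduced to verifying hypotheses and matching the output with the stated shape. Writing $s=\sigma+i\beta$, we have
\[
U^{\dag}(r,s)=\int_{0}^{\infty}U(x)\,x^{\sigma-1}e^{ih(x)}\,\mathrm{d}x,\qquad h(x)=-2\pi rx+\beta\log x=\beta\,\phi(x),
\]
where, using $2\pi r=\beta/x_{0}$, the function $\phi(x)=\log x-x/x_{0}$ is \emph{fixed}, with $\phi'(x_{0})=0$ and $\phi''(x_{0})=-x_{0}^{-2}$. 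The hypothesis $x_{0}=\beta/(2\pi r)\in[a/2,2b]$ forces $r$ and $\beta$ to have the same sign and $x_{0}\asymp1$, whence $|\beta|=2\pi|r|x_{0}\asymp|r|$, so the error term $\min\{|\beta|^{-5/2},|r|^{-5/2}\}$ is comparable to $|\beta|^{-5/2}$; for $|\beta|$ in a bounded range the statement is trivial (both $U^{\dag}(r,s)$ and the main term being then $\ll|\beta|^{-5/2}$), so we may assume $|\beta|$ large. Since $\phi$ and all its derivatives are $O(1)$ on a fixed neighbourhood of $[a,b]$ with $\phi''$ nonvanishing there, and the amplitude $U(x)x^{\sigma-1}$ has all derivatives $\ll_{\sigma,a,b}1$, the hypotheses of the stationary-phase lemma (in the large parameter $\beta$) are satisfied; that lemma also covers the degenerate case $x_{0}\notin(a,b)$, where $U$ and all its derivatives vanish at $x_{0}$, the main term is $0$, and only the error bound is asserted.

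Applying the lemma, set $h''(x_{0})=\beta\phi''(x_{0})=-\beta/x_{0}^{2}=-(2\pi r)^{2}/\beta$ (so $|h''(x_{0})|\asymp|\beta|$) and $H(x)=h(x)-h(x_{0})-\frac12 h''(x_{0})(x-x_{0})^{2}$, so that $H(x_{0})=H'(x_{0})=H''(x_{0})=0$. With $G(x)=U(x)x^{\sigma-1}e^{iH(x)}$ and the principal branch of the square root, the expansion reads
\[
U^{\dag}(r,s)=\sqrt{\frac{2\pi}{-ih''(x_{0})}}\;e^{ih(x_{0})}\sum_{n=0}^{5}\frac{1}{n!}\left(\frac{i}{2h''(x_{0})}\right)^{n}G^{(2n)}(x_{0})+O\!\left(|\beta|^{-5/2}\right).
\]
It remains to recast the explicit factor. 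A short computation gives $h(x_{0})=-\beta+\beta\log x_{0}=\beta\log(\beta/2\pi er)$, hence $e^{ih(x_{0})}=(\beta/2\pi er)^{i\beta}$, while
\[
\sqrt{\frac{2\pi}{-ih''(x_{0})}}=x_{0}\,\frac{\sqrt{2\pi}\,e(1/8)}{\sqrt{-\beta}}.
\]
Absorbing the factor $x_{0}^{1-\sigma}$ into the sum turns $x_{0}\sum_{n=0}^{5}\frac{1}{n!}(i/2h''(x_{0}))^{n}G^{(2n)}(x_{0})$ into $U^{*}(x_{0})(\beta/2\pi r)^{\sigma}$ with $U^{*}$ and the coefficients $p_{n}$ exactly as stated, which is the claimed main term.

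The remaining point, and the one requiring the most care, is to control the error and to verify $U^{*(\ell)}(x_{0})\ll1$. Here one uses that $H,H',H''$ vanish at $x_{0}$ while $H^{(\ell)}(x_{0})=h^{(\ell)}(x_{0})=(-1)^{\ell-1}(\ell-1)!\,\beta x_{0}^{-\ell}\ll|\beta|$ for $\ell\ge3$; expanding $G^{(m)}(x_{0})$ by the Leibniz rule, only the terms in which every factor $H^{(\ell_{j})}$ with $j\ge1$ has $\ell_{j}\ge3$ survive, so $|G^{(2n)}(x_{0})|\ll|\beta|^{\lfloor 2n/3\rfloor}$, and hence
\[
p_{n}(x_{0})\ll|h''(x_{0})|^{-n}|G^{(2n)}(x_{0})|\ll|\beta|^{\lfloor 2n/3\rfloor-n}\le1\qquad(0\le n\le5);
\]
differentiating in $x_{0}$ only brings down further $O(1)$ factors because $x_{0}\asymp1$, and multiplying by $x_{0}^{1-\sigma}\ll1$ gives $U^{*(\ell)}(x_{0})\ll_{\sigma,a,b,\ell}1$. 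The same estimates show that the terms $n\ge6$ dropped from the sum contribute $\ll|\beta|^{-1/2}|\beta|^{\lfloor 2n/3\rfloor-n}\ll|\beta|^{-5/2}$, so the truncation at $n=5$ is legitimate; the remaining error --- the Gaussian tail, the passage from the localised integral to the full line, and the high-order Taylor remainder integrated against the Gaussian over the window $|x-x_{0}|\ll|\beta|^{-1/2+\varepsilon}$ --- is $\ll_{A}|\beta|^{-A}+|\beta|^{-5/2+O(\varepsilon)}\ll|\beta|^{-5/2}$ for $\varepsilon$ small. I expect this last bookkeeping --- pinning down precisely the exponent $-5/2$ and the truncation level $n\le5$ --- to be the main obstacle; everything else is routine stationary phase, modulo the usual care with the $e(1/8)$ factor and the branch of the square root.
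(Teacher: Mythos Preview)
Your proposal is correct and aligns with the paper's treatment: the paper does not give an independent proof of this lemma but simply cites Proposition~8.2 of \cite{BHY} (and \cite{P}), which is exactly the stationary-phase expansion you invoke. Your verification of the hypotheses, the computation of $h(x_0)$, $h''(x_0)$, and the prefactor, and the bound $p_n(x_0)\ll|\beta|^{\lfloor 2n/3\rfloor-n}$ via the vanishing of $H,H',H''$ at $x_0$ are all correct and constitute precisely the details one would fill in when applying that proposition.
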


\section{Estimating $\mathcal {S}^{\flat}(N)$}
\setcounter{equation}{0}
\medskip

Recall that
\bna
\mathcal {S}^{\flat}(N)&=&\frac{1}{K}\int_{\mathbb{R}}\int_0^1
V\left(\frac{v}{K}\right)
\sum_{n=1}^{\infty}
\lambda(1,n)n^{iv}V\left(\frac{n}{N}\right)
\sum_{1\leq q\leq Q/M_1}\sum_{Q<a\leq qM_1+Q\atop (a,qM_1)=1}
\frac{1}{aqM_1}\\
&&e\left(\frac{\overline{a}n}{qM_1^2}-\frac{n\zeta}{aqM_1^2}\right)
\sum_{m\in \mathbb{Z} \atop M_1|n-m}
\chi(m)m^{-i(t+v)}U\left(\frac{m}{N}\right)
e\left(-\frac{\overline{a}m}{qM_1^2}+\frac{m\zeta}{aqM_1^2}\right)
\mathrm{d}v\mathrm{d}\zeta.
\ena
Applying Poisson summation formula with modulus
$qM_1^2M_2$ on the sum over $m$ we get
\bna
&&\sum_{m\in \mathbb{Z} \atop M_1|n-m}
\chi(m)m^{-i(t+v)}U\left(\frac{m}{N}\right)
e\left(-\frac{\overline{a}m}{qM_1^2}+\frac{m\zeta}{aqM_1^2}\right)\\
&=&\frac{N^{1-i(t+v)}}{qM_1^2M_2}\sum_{m\in \mathbb{Z}}
\mathscr{E}(a,m,q)U^{\dag}\left(\frac{N(ma-\zeta M_2)}{aqM_1^2M_2},1-i(t+v)\right),
\ena
where $U^{\dag}(r,s)=\int_0^{\infty}U(y)e(-ry)y^{s-1}\mathrm{d}y$
and
\bna
\mathscr{E}(a,m,q)=\sum_{c\bmod qM_1^2M_2\atop c\equiv n \bmod M_1}\chi(c)
e\left(\frac{(m-M_2\overline{a})c}{qM_1^2M_2}\right).
\ena

\begin{lemma}
Let $q=q_0M_1^jM_2^k$, $(q_0,M_1M_2)=1$ with $j,k\geq 0$.
We have
\bna
\mathscr{E}(a,m,q)
=\varepsilon_2qM_1\sqrt{M_2}
\chi_1(q_0M_2^{k+1}n)\chi_2(q_0M_1\overline{m^*})
e\left(m^*M_2^kn/M_1\right)
\ena
if $m\equiv M_2\overline{a}\bmod qM_1$, and is zero otherwise.
Here $\varepsilon_2\sqrt{M_2}$ is the value of the Gauss sum corresponding
to the character $\chi_2$, and
\bna
m^*=(m-M_2\overline{a})/M_1^{j+1}M_2^k.
\ena
In particular, we have $a\equiv \overline{m}M_2\bmod qM_1$ if $k=0$. If $k\geq 1$, we have
$M_2|m$ and
$a\equiv \overline{(m/M_2)} \bmod qM_1/M_2$.
\end{lemma}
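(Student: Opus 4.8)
The plan is to evaluate $\mathscr{E}(a,m,q)$ directly via the Chinese Remainder Theorem. Writing $q=q_0M_1^jM_2^k$ with $(q_0,M_1M_2)=1$, the modulus factors as $qM_1^2M_2=q_0\cdot M_1^{j+2}\cdot M_2^{k+1}$ into pairwise coprime parts. I would parametrize $c$ modulo $qM_1^2M_2$ by its residues $c_0\bmod q_0$, $c_1\bmod M_1^{j+2}$, $c_2\bmod M_2^{k+1}$; the side condition $c\equiv n\bmod M_1$ then becomes $c_1\equiv n\bmod M_1$, and since $\chi_1$ (resp. $\chi_2$) has the prime modulus $M_1$ (resp. $M_2$) we have $\chi(c)=\chi_1(c_1)\chi_2(c_2)$. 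Expanding $1/(q_0M_1^{j+2}M_2^{k+1})$ into partial fractions modulo $1$ splits the additive character into a product over the three moduli, so that $\mathscr{E}$ factors as $\mathscr{E}_0\mathscr{E}_1\mathscr{E}_2$, one factor for each of $c_0,c_1,c_2$.

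I would then compute each factor. The factor $\mathscr{E}_0$ is a complete additive character sum in $c_0$, hence equals $q_0$ if $q_0\mid(m-M_2\overline{a})$ and $0$ otherwise. For $\mathscr{E}_1$, since $\chi_1(c_1)=\chi_1(n)$ is constant on the range of summation, I would substitute $c_1=n+M_1d$ with $d$ running over a complete residue system modulo $M_1^{j+1}$; the $d$-sum is then a complete additive character sum, contributing $M_1^{j+1}$ under the constraint $M_1^{j+1}\mid(m-M_2\overline{a})$ and leaving a pure phase in $n$. The factor $\mathscr{E}_2$ is a $\chi_2$-twisted additive character sum modulo the prime power $M_2^{k+1}$; iterating the standard reduction $c_2\mapsto c_2'+M_2c_2''$ — each step forcing one further power of $M_2$ to divide $m-M_2\overline{a}$ and producing a factor $M_2$ — collapses it after $k$ steps to $M_2^k\,\varepsilon_2\sqrt{M_2}$ times an explicit $\overline{\chi_2}$-value (a Gauss sum of $\chi_2$), valid exactly when $M_2^k\,\|\,(m-M_2\overline{a})$; the case $M_2^{k+1}\mid(m-M_2\overline{a})$ gives $0$ since $\chi_2$ is nontrivial, in agreement with the convention $\chi_2(q_0M_1\overline{m^*})=0$ when $M_2\mid m^*$.

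Multiplying $\mathscr{E}_0\mathscr{E}_1\mathscr{E}_2$, the three divisibility constraints combine by coprimality into the single condition $m\equiv M_2\overline{a}\bmod qM_1$ (note $qM_1=q_0M_1^{j+1}M_2^k$). Writing $m-M_2\overline{a}=M_1^{j+1}M_2^km^*$ and substituting into the phases and into the arguments of $\chi_1$ and $\chi_2$, I would simplify using $q_0M_1^{j+1}M_2^k=qM_1$, the elementary congruences $M_2^k\,\overline{M_2^{k+1}}\equiv\overline{M_2}\pmod{M_1}$ and $M_1^{j+1}\,\overline{M_1^{j+2}}\equiv\overline{M_1}\pmod{M_2}$, and $\overline{\chi_2}(x)=\chi_2(\overline{x})$, which collapses the product to the stated closed form (with the Gauss sum contributing $\varepsilon_2\sqrt{M_2}$). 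For the supplementary assertions, multiplying $m\equiv M_2\overline{a}\bmod qM_1$ by $a$ gives $am\equiv M_2\bmod qM_1$; when $k=0$ one has $(m,qM_1)=1$ since $m\equiv M_2\overline{a}$ and $(M_2\overline{a},qM_1)=1$, whence $a\equiv M_2\overline{m}\bmod qM_1$, while for $k\geq1$ the same congruence forces $M_2\mid m$, and dividing through by $M_2$ gives $a\equiv\overline{m/M_2}\bmod qM_1/M_2$. The main obstacle is purely bookkeeping: tracking the modular inverses produced by the CRT partial-fraction expansion and by the Gauss-sum reduction and matching them with the $q_0$, $M_1$, $M_2$ in the final characters and exponential, while handling the boundary cases $j=0$ and $k=0$ uniformly.
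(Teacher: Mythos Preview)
Your approach is correct and essentially identical to the paper's own proof: both split the modulus $qM_1^2M_2=q_0\cdot M_1^{j+2}\cdot M_2^{k+1}$ via the Chinese Remainder Theorem into three coprime factors, evaluate the $q_0$-factor as a complete Ramanujan-type sum, the $M_1^{j+2}$-factor using the congruence $c\equiv n\bmod M_1$ to freeze $\chi_1$, and the $M_2^{k+1}$-factor as (a power of $M_2$ times) a Gauss sum of $\chi_2$, then reassemble. Your write-up is in fact slightly more explicit than the paper's about the iterated reduction for the $M_2^{k+1}$-sum and about the boundary case $M_2\mid m^*$, but there is no substantive difference in method.
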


\begin{proof}
We have
\bna
\mathscr{E}(a,m,q)
&=&\sum_{c_1\bmod q_0}e\left(\frac{
(m-M_2\overline{a})c_1}{q_0}\right)
\sum_{c_2\bmod M_1^{j+2}\atop c_2\equiv n\bmod M_1}\chi_1(q_0M_2^{k+1}c_2)
e\left(\frac{(m-M_2\overline{a})c_2}{M_1^{j+2}}\right)\\
&&\sum_{c_3\bmod M_2^{k+1}}\chi_2(q_0M_1^{j+2}c_3)
e\left(\frac{(m-M_2\overline{a})c_3}{M_2^{k+1}}\right),
\ena
where the first sum vanishes unless $m\equiv M_2\overline{a}\bmod q_0$
in which case it is $q_0$. The second sum vanishes unless $m\equiv M_2\overline{a}\bmod M_1^{j+1}$
in which case it equals
\bna
\chi_1(q_0M_2^{k+1}n)e\left(\frac{m^*M_2^kn}{M_1}\right)M_1^{j+1},
\ena
where $m^*=(m-M_2\overline{a})/M_1^{j+1}M_2^k$. Finally, the last sum
equals
\bna
\varepsilon_2\chi_2(q_0M_1)\overline{\chi_2}(m^*)M_2^k\sqrt{M_2}
\ena
if $m\equiv M_2\overline{a}\bmod M_2^{k}$, and is zero otherwise,
where $\varepsilon_2\sqrt{M_2}$ is the value of the Gauss sum corresponding
to the character $\chi_2$.
\end{proof}
Note that if $m=0$, we have $k\geq 1$ and $(m,qM_1)=M_2$. Then
\bna
\frac{N|0-\zeta M_2|}{aqM_1^2M_2}\leq \frac{N}{QM_2M_1^2}<(M t)^{-\varepsilon}t.
\ena
For $|m|\geq 1$, we have
\bna
\frac{N|ma-\zeta M_2|}{aqM_1^2M_2}\asymp \frac{N|m|}{qM_1^2M_2}.
\ena
Applying (2.6) one sees that the contribution from $m=0$ and
$|m|\geq qM_1(Mt)^{1+\varepsilon}/N$ is negligibly small. For smaller nonzero $m$,
by the second derivative bound for the exponential integral, we have
\bna
U^{\dag}\left(\frac{N(ma-\zeta M_2)}{aqM_1^2M_2},1-i(t+v)\right)\ll t^{-1/2}.
\ena
Therefore,
\bna
\mathcal {S}^{\flat}(N)&\ll&\frac{N}{M_1\sqrt{M_2t}}
\sum_{n\leq 2N}
|\lambda(1,n)|
\sum_{1\leq q\leq Q/M_1\atop (q,M_2)=1}
\frac{1}{QqM_1}\frac{qM_1(M t)^{1+\varepsilon}}{N}\\
&&+\frac{N}{M_1\sqrt{M_2t}}
\sum_{n\leq 2N}
|\lambda(1,n)|\sum_{1\leq q\leq Q/M_1\atop M_2|q}
\frac{M_2}{QqM_1}\frac{qM_1(M t)^{1+\varepsilon}}{N}\\
&\ll&N\sqrt{M t}/M_1^{3/2}.
\ena
This completes the proof of Proposition 1.

\section{Estimating $\mathcal {S}^{\sharp}(N)$--I}
\setcounter{equation}{0}
\medskip

First we detect the congruence $m\equiv n \bmod M_1$ using exponential sums to get
\bna
\mathcal {S}^{\sharp}(N)=\mathcal {S}_0(N)+\mathcal {S}_1(N)
\ena
where
\bna
\mathcal {S}_0(N)&=&\frac{1}{KM_1}\int_{\mathbb{R}}\int_0^1
V\left(\frac{v}{K}\right) \sum_{1\leq q\leq Q\atop (q,M_1)=1}\sum_{Q<a\leq q+Q\atop (a,q)=1}
\frac{1}{aq}\nonumber\\
&&\sum_{n=1}^{\infty}
\lambda(1,n)e\left(\frac{\overline{aM_1}n}{q}\right)
n^{iv}
V\left(\frac{n}{N}\right)e\left(-\frac{n\zeta}{aqM_1}\right)\nonumber\\
&&\sum_{m\in \mathbb{Z}}
\chi(m)e\left(\frac{-\overline{aM_1}m}{q}\right)
m^{-i(t+v)}U\left(\frac{m}{N}\right)e\left(\frac{m\zeta}{aqM_1}\right)
\mathrm{d}v\mathrm{d}\zeta
\ena
and
\bea
\mathcal {S}_1(N)&=&\frac{1}{KM_1}\int_{\mathbb{R}}\int_0^1
V\left(\frac{v}{K}\right) \sum_{1\leq q\leq Q\atop (q,M_1)=1}
\sum_{Q<a\leq q+Q\atop (a,q)=1}\;
\sideset{}{^*}\sum_{b\bmod M_1}\frac{1}{aq}\nonumber\\
&&\sum_{n=1}^{\infty}
\lambda(1,n)e\left(\frac{(\overline{aM_1}M_1+bq)n}{qM_1}\right)
n^{iv}
V\left(\frac{n}{N}\right)e\left(-\frac{n\zeta}{aqM_1}\right)\nonumber\\
&&\sum_{m\in \mathbb{Z}}
\chi(m)e\left(\frac{-(\overline{aM_1}M_1+bq)m}{qM_1}\right)
m^{-i(t+v)}U\left(\frac{m}{N}\right)e\left(\frac{m\zeta}{aqM_1}\right)
\mathrm{d}v\mathrm{d}\zeta,
\eea
where the $*$ denotes the condition $(b,M_1)=1$.
In the rest of the paper, we will estimate $\mathcal {S}_1(N)$.
The analysis for $\mathcal {S}_0(N)$ is similar,
and by following the proof for $\mathcal {S}_1(N)$, one can see that it is smaller.

Applying Poisson
summation with modulus $qM_1M_2=qM$ on the sum over $m$ in (4.1) we get
\bna
&&\sum_{m\in \mathbb{Z}}
\chi(m)e\left(\frac{-(\overline{aM_1}M_1+bq)m}{qM_1}\right)
m^{-i(t+v)}U\left(\frac{m}{N}\right)e\left(\frac{m\zeta}{aqM_1}\right)\nonumber\\
&=&\frac{N^{1-i(t+v)}}{qM}\sum_{m\in \mathbb{Z}}
\mathscr{D}(a,b,m,q)U^{\dag}\left(\frac{N(ma-\zeta M_2)}{aqM},1-i(t+v)\right),
\ena
where
\bna
\mathscr{D}(a,b,m,q)=\sum_{c\bmod qM}\chi(c)
e\left(\frac{cm}{qM}-\frac{c(\overline{aM_1}M_1+bq)}{qM_1}\right).
\ena

\begin{lemma}
Let $q=q_0M_2^k$, $(q_0,M_2)=1$ with $k\geq 0$.
We have
\bna
\mathscr{D}(a,b,m,q)
=\varepsilon_1\varepsilon_2q\sqrt{M}\chi_2(q_0M_1)
\overline{\chi_1}(\overline{qM_2}m-b)
\overline{\chi_2}(m_0)
\ena
if $m\equiv M_2\overline{a}\bmod q$, and is zero otherwise.
Here $\varepsilon_i\sqrt{M_i}$ is the value of the Gauss sum corresponding
to the character $\chi_i$, and $
m_0=(m-M_2\overline{a})/M_2^k.$
In particular, we have $a\equiv \overline{m}M_2$ if $k=0$. If $k\geq 1$, we have
$M_2|m$ and
$a\equiv \overline{(m/M_2)} \bmod q/M_2$.
\end{lemma}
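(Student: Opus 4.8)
The plan is to evaluate the character sum $\mathscr{D}(a,b,m,q)$ by factoring modulo the pairwise coprime parts $q_0$, $M_1$, and $M_2^{k+1}$ via the Chinese Remainder Theorem, exactly as in the proof of Lemma~5. Writing $c$ in terms of residues $c_1 \bmod q_0$, $c_2 \bmod M_1$, $c_3 \bmod M_2^{k+1}$ through CRT, the exponential $e\!\left(\frac{cm}{qM} - \frac{c(\overline{aM_1}M_1 + bq)}{qM_1}\right)$ splits multiplicatively, and $\chi(c) = \chi_1(c)\chi_2(c)$ distributes so that $\chi_1$ is evaluated on the $M_1$-component (twisted by the appropriate units from the other moduli) and $\chi_2$ on the $M_2^{k+1}$-component. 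First I would isolate the sum over $c_1 \bmod q_0$: this is a complete additive character sum with trivial multiplicative weight, so it vanishes unless the coefficient of $c_1$ is $\equiv 0 \bmod q_0$, which after clearing inverses is the congruence $m \equiv M_2\overline{a} \bmod q_0$, and otherwise contributes a factor $q_0$.

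Next I would handle the sum over $c_3 \bmod M_2^{k+1}$, where $\chi_2$ is primitive modulo $M_2$. When $k = 0$ this is a Gauss sum for $\chi_2$ shifted by a unit, giving $\varepsilon_2\sqrt{M_2}$ times $\overline{\chi_2}$ of the shift, which forces $m \equiv M_2\overline{a} \bmod M_2$ (i.e. $M_2 \mid m$ is \emph{not} yet forced at this level since the shift absorbs it — care is needed here) and introduces $\overline{\chi_2}(m_0)$ with $m_0 = (m - M_2\overline{a})/M_2^k$; when $k \ge 1$ one must track the extra factors of $M_2$, using that the $e(\cdot/M_2^{k+1})$ sum against a character of conductor $M_2$ forces the linear coefficient to be divisible by $M_2^k$, which is where the conditions $M_2 \mid m$ and $a \equiv \overline{(m/M_2)} \bmod q/M_2$ emerge. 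The sum over $c_2 \bmod M_1$ is the one producing $\overline{\chi_1}(\overline{qM_2}\,m - b)$: here $\chi_1$ has conductor exactly $M_1$, and the additive twist $e\!\left(\frac{c_2(\cdots)}{M_1}\right)$ combines with $\chi_1(c_2)$ (after extracting the unit factors $q_0 M_2^{k+1}$, which give the $\chi_1(q_0 M_2^{k+1})$ — but note the stated formula only shows $\chi_2(q_0 M_1)$, so one must check how the $\chi_1$-unit factor cancels or is reorganized) into a Gauss sum $\varepsilon_1\sqrt{M_1}\,\overline{\chi_1}$ evaluated at the argument $\overline{qM_2}\,m - b$, with no vanishing condition since that argument ranges over all residues as $b$ does.

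Assembling the three pieces gives $\mathscr{D}(a,b,m,q) = q_0 \cdot M_1 \cdot (\text{stuff}) \cdot \varepsilon_1 \varepsilon_2 M_2^k \sqrt{M_1 M_2}$ times the character factors, and since $q = q_0 M_2^k$ this is $\varepsilon_1\varepsilon_2 q\sqrt{M}$ times the product of the $\chi$-factors, subject to $m \equiv M_2\overline{a} \bmod q_0$ and $m \equiv M_2\overline{a} \bmod M_2^k$, which together (with $(q_0, M_2) = 1$) give $m \equiv M_2\overline{a} \bmod q$. The final step is to simplify the combined congruences to the clean statements: when $k = 0$ the condition $m \equiv M_2\overline{a} \bmod q_0 = q$ rearranges to $a \equiv \overline{m}M_2 \bmod q$; when $k \ge 1$, pulling the forced $M_2 \mid m$ out reduces $m \equiv M_2\overline{a} \bmod M_2^{k+1}\text{-ish}$ to $a \equiv \overline{(m/M_2)} \bmod q/M_2$.

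The main obstacle I expect is the careful bookkeeping of the unit factors and their characters — in particular making sure the $\chi_1$- and $\chi_2$-arguments come out in exactly the normalized form $\overline{\chi_1}(\overline{qM_2}\,m - b)\,\overline{\chi_2}(m_0)$ with the single prefactor $\chi_2(q_0 M_1)$ and no leftover $\chi_1$-unit (which requires either that $\chi_1(q_0 M_2^{k+1})$ gets absorbed into the argument of $\overline{\chi_1}$ by a change of variable $b \mapsto \text{unit} \cdot b$ in the outer $b$-sum, or that it genuinely appears and the lemma statement is using a convention that hides it). Getting the sign/epsilon-factor conventions for the two Gauss sums consistent with the definition $\varepsilon_i\sqrt{M_i}$, and correctly identifying when $M_2 \mid m$ is genuinely forced versus absorbed into $m_0$, are the places where an error is most likely to creep in; everything else is routine CRT and Gauss-sum evaluation parallel to Lemma~5.
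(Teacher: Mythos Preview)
Your approach is exactly the paper's: split $\mathscr{D}(a,b,m,q)$ via CRT into sums modulo $q_0$, $M_2^{k+1}$, and $M_1$, then evaluate each piece. Your worry about a leftover factor $\chi_1(q_0M_2^{k+1})$ is unnecessary: in the $M_1$-sum
\[
\sum_{c_3\bmod M_1}\chi_1\bigl(q_0M_2^{k+1}c_3\bigr)\,e\!\left(\frac{(\overline{qM_2}\,m-b)\,q_0M_2^{k+1}c_3}{M_1}\right),
\]
the substitution $d=q_0M_2^{k+1}c_3$ (a bijection mod $M_1$) absorbs the unit simultaneously from the character and the additive twist, leaving the clean Gauss sum $\varepsilon_1\sqrt{M_1}\,\overline{\chi_1}(\overline{qM_2}\,m-b)$ with no residual $\chi_1$-factor; no change of variable in the outer $b$-sum is needed. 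One small slip: in your assembly you wrote an extra factor of $M_1$ --- the three pieces contribute $q_0$, $M_2^k\sqrt{M_2}$, and $\sqrt{M_1}$ respectively, whose product is $q\sqrt{M}$ as you correctly state at the end.
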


\begin{proof}
Note that
\bna
\mathscr{D}(a,b,m,q)
&=&\sum_{c_1\bmod q_0}e\left(\frac{(m-M_2\overline{a})c_1}{q_0}\right)
\sum_{c_2\bmod M_2^{k+1}}\chi_2(q_0M_1c_2)
e\left(\frac{(m-M_2\overline{a})c_2}{M_2^{k+1}}\right)\\
&&\sum_{c_3\bmod M_1}\chi_1(q_0M_2^{k+1}c_3)
e\left(\frac{(\overline{qM_2}m-b)q_0M_2^{k+1}c_3}{M_1}\right),
\ena
where the first sum vanishes unless $m\equiv M_2\overline{a}\bmod q_0$
in which case it is $q_0$. The second sum equals $\varepsilon_2
\chi_2(q_0M_1)\overline{\chi_2}(m_0)M_2^k \sqrt{M_2}$ with $m_0=(m-M_2\overline{a})/M_2^k$
if $m\equiv M_2\overline{a}\bmod M_2^k$, and is zero otherwise.
Here $\varepsilon_i\sqrt{M_i}$ is the value of the Gauss sum corresponding
to the character $\chi_i$.
Thus the lemma follows.
\end{proof}

By Lemma 6 one sees that if $m=0$, then $k\geq 1$ and
$(m,q)=M_2$. So
\bna
\frac{N|0-\zeta M_2|}{aqM}\leq \frac{N}{QM}<(Mt)^{-\varepsilon}t.
\ena
For $|m|\geq 1$, we have
\bna
\frac{N|ma-\zeta M_2|}{aqM}\asymp \frac{N|m|}{qM}.
\ena
Applying (2.6) it follows that the contribution from $m=0$ and
$|m|\geq q(Mt)^{1+\varepsilon}/N$ is negligibly small.
For $1\leq |m|<q(Mt)^{1+\varepsilon}/N$, we have $N/(Mt)^{1+\varepsilon}<q\leq Q$.
Taking a dyadic subdivision for the sum over $q$, we have the following.

\begin{lemma}
Suppose $K<\min\left\{t,NM_1/M^2\right\}(Mt)^{-\varepsilon}$.
We have
\bna
\mathcal {S}_1(N)=\varepsilon_1\varepsilon_2\chi_2(M_1)N^{-it}
\sum_{N/(Mt)^{1+\varepsilon}<C\leq Q\atop C \; \mathrm{dyadic}}
\mathcal {S}_1(N,C)+O((Mt)^{-1000}),
\ena
where
\bna
\mathcal {S}_1(N,C)
&=&\frac{N}{KM_1\sqrt{M}}
\int_{\mathbb{R}}\int_0^1
V\left(\frac{v}{K}\right)N^{-iv} \sum_{q=q_0M_2^k\sim C\atop(q_0,M)=1}\frac{\chi_2(q_0)}{q}
\sum_{Q<a\leq q+Q\atop (a,q)=1}\frac{1}{a}\quad\sideset{}{^*}\sum_{b\bmod M_1}\nonumber\\
&&\times\sum_{1\leq |m|\leq q(Mt)^{1+\varepsilon}/N\atop m\equiv M_2\overline{a}\bmod q}
\overline{\chi_1}(\overline{qM_2}m-b)
\overline{\chi_2}(m_0)U^{\dag}\left(\frac{N(ma-\zeta M_2)}{aqM},1-i(t+v)\right)\nonumber\\
&&\times\sum_{n=1}^{\infty}
\lambda(1,n)e\left(\frac{(\overline{aM_1}M_1+bq)n}{qM_1}\right)
n^{iv}V\left(\frac{n}{N}\right)e\left(-\frac{n\zeta}{aqM_1}\right)
\mathrm{d}v\mathrm{d}\zeta.
\ena
\end{lemma}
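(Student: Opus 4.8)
The plan is to treat Lemma 7 as the bookkeeping step that records the effect of the Poisson (Voronoi-dual) transformation already carried out in (4.2), followed by the explicit evaluation of the character sum $\mathscr{D}(a,b,m,q)$ and a truncation of the resulting $m$-sum. No new idea is needed: the work is algebraic reorganization together with one application of the rapid-decay bound (2.6).

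First I would insert the formula for $\mathscr{D}(a,b,m,q)$ from Lemma 6 into (4.2). Writing $q=q_0M_2^k$ with $(q_0,M_2)=1$, and using $(q,M_1)=1$ to get $(q_0,M)=1$, Lemma 6 replaces $\mathscr{D}(a,b,m,q)$ by $\varepsilon_1\varepsilon_2\,q\sqrt{M}\,\chi_2(q_0M_1)\overline{\chi_1}(\overline{qM_2}m-b)\overline{\chi_2}(m_0)$ subject to the congruence $m\equiv M_2\overline{a}\bmod q$, and kills all other $m$. The factor $q\sqrt{M}$ cancels the $1/(qM)$ produced by Poisson summation, leaving $\varepsilon_1\varepsilon_2N^{1-i(t+v)}/\sqrt{M}$ times the three characters. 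Splitting $N^{1-i(t+v)}=N\cdot N^{-it}\cdot N^{-iv}$ and $\chi_2(q_0M_1)=\chi_2(M_1)\chi_2(q_0)$, I would pull the constants $\varepsilon_1\varepsilon_2\chi_2(M_1)N^{-it}$ and the normalising factor $N/(KM_1\sqrt{M})$ to the front; what remains inside is exactly the double integral in $v$ and $\zeta$, the sum over $q\sim C$ with weight $\chi_2(q_0)/q$, the sum over $a$ with weight $1/a$, the restricted sum over $b\bmod M_1$, the sum over $m$ in the class $M_2\overline{a}\bmod q$ carrying $\overline{\chi_1}(\overline{qM_2}m-b)\overline{\chi_2}(m_0)$ and $U^{\dag}(N(ma-\zeta M_2)/(aqM),1-i(t+v))$, the twist $N^{-iv}$, and the untouched $n$-sum --- that is, the integrand of $\mathcal{S}_1(N,C)$.

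Next I would truncate the $m$-sum using (2.6), exactly as in the paragraph preceding the statement. When $m=0$, Lemma 6 forces $k\ge 1$ and $(m,q)=M_2$, so the argument of $U^{\dag}$ is $\le N/(QM)<t(Mt)^{-\varepsilon}$ while its imaginary part $t+v\asymp t$; taking the exponent $j$ in (2.6) large in terms of $\varepsilon$ then makes this term, summed against the polynomially many remaining sums over $q$, $a$, $b$ and $n$, smaller than $(Mt)^{-1000}$. When $|m|\ge q(Mt)^{1+\varepsilon}/N$ the argument of $U^{\dag}$ is $\asymp N|m|/(qM)\ge t(Mt)^{\varepsilon}$, and (2.6) is again negligible. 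It is precisely here that the hypothesis $K<\min\{t,NM_1/M^2\}(Mt)^{-\varepsilon}$ enters, via $Q=\sqrt{N/(KM_1)}$ bounding $N/(QM)$. The surviving range $1\le|m|<q(Mt)^{1+\varepsilon}/N$ is non-empty only when $q>N/(Mt)^{1+\varepsilon}$, so the $q$-sum runs over $N/(Mt)^{1+\varepsilon}<q\le Q$; splitting it into $O((Mt)^{\varepsilon})$ dyadic blocks $q\sim C$ yields the asserted identity with $\mathcal{S}_1(N,C)$ as displayed.

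The only point needing genuine care is the truncation: one must check that, under the stated hypothesis on $K$ and in the relevant range $N\asymp(Mt)^{3/2}$, the argument of $U^{\dag}$ for $m=0$ really drops below $t^{1-\varepsilon}$, and that the trivial count of the residual $a$-, $b$-, $q$- and $n$-sums is crude enough for the power saving $(Mt)^{-j\varepsilon}$ from (2.6) to dominate it. Both are routine once the size of $Q$ is tracked, and the rest of the argument is purely formal.
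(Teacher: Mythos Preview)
Your proposal is correct and follows the paper's own argument essentially line for line: insert the evaluation of $\mathscr{D}(a,b,m,q)$ from Lemma 6 into the Poisson output, collect the scalar factors $\varepsilon_1\varepsilon_2\chi_2(M_1)N^{-it}$ and $N/(KM_1\sqrt{M})$, truncate the $m$-sum by the decay bound (2.6) exactly as in the paragraph preceding the lemma, and finish with a dyadic split of the surviving $q$-range $N/(Mt)^{1+\varepsilon}<q\le Q$. One terminological nit: the transformation producing $U^{\dag}$ is Poisson summation on the $m$-sum, not a Voronoi step (Voronoi is applied only afterwards to the $n$-sum in Lemma 8), but your description of the mechanics is accurate.
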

Applying Lemma 1 with
$\phi(y)=y^{iv}V\left(y/N\right)e\left(-\zeta y/aqM_1\right)$ we have
\bna
&&\sum_{n=1}^{\infty}
\lambda(1,n)e\left(\frac{(\overline{aM_1}M_1+bq)n}{qM_1}\right)
n^{iv}
V\left(\frac{n}{N}\right)e\left(-\frac{n\zeta}{aqM_1}\right)\nonumber\\
&=&qM_1N^{iv}\sum_{\pm}\sum_{n_1|qM_1}
\sum_{n_2=1}^{\infty}
\frac{\lambda(n_2,n_1)}{n_1n_2}
S\left(\overline{\overline{aM_1}M_1+bq},\pm n_2;\frac{qM_1}{n_1}\right)
\mathcal {J}_{\pm}\left(\frac{n_1^2n_2}{q^3M_1^3},\frac{\zeta}{aqM_1}\right),\nonumber\\
\ena
where
\bna
\mathcal {J}_{\pm}\left(x,y\right)
=\frac{1}{2\pi i}\int\limits_{(\sigma)}\left(Nx\right)^{-s}
\gamma_{\pm}(s)V^{\dag}\left(Ny,-s+iv\right)\mathrm{d}s.
\ena
By (2.6),
\bna
V^{\dag}\left(\frac{\zeta N}{aqM_1},-s+iv\right)\ll_j
\min\left\{1,\left(\frac{1}{q|v-\tau|}\sqrt{\frac{N K}{M_1}}\right)^j\right\}
\ena
for any $j\geq 0$. Then shifting the contour to $\sigma=\ell$ (a large positive integer) and
taking $j=3\ell+3$ (in view of (2.2)) one has
\bna
\mathcal {J}_{\pm}\left(\frac{n_1^2n_2}{q^3M_1^3},\frac{\zeta}{aqM_1}\right)
\ll \left(\frac{1}{q}\sqrt{\frac{N K}{M_1}}\right)^{5/2}
\left(\frac{n_1^2n_2}{N^{1/2}K^{3/2}M_1^{3/2}}\right)^{-\ell}.
\ena
Thus the contribution from $n_1^2n_2\geq
N^{1/2+\varepsilon}K^{3/2}M_1^{3/2}$ is negligible. For
$n_1^2n_2< N^{1/2+\varepsilon}K^{3/2}M_1^{3/2}$,
we shift the contour to $\sigma=-1/2$, and obtain
\bna
\mathcal {J}_{\pm}\left(\frac{n_1^2n_2}{q^3M_1^3},\frac{\zeta}{aqM_1}\right)
&=&\sum_{J \in \mathscr{J}}\frac{1}{2\pi}
\int_{\mathbb{R}}
\left(\frac{Nn_1^2n_2}{q^3M_1^3}\right)^{1/2-i\tau}
\gamma_{\pm}\left(-\frac{1}{2}+i\tau\right)\\
&&V^{\dag}\left(\frac{N \zeta}{aqM_1},\frac{1}{2}+i(v-\tau)\right)W_J(\tau)\mathrm{d}\tau
+O((Mt)^{-1000}),
\ena
where as in \cite{Munshi32}, $\mathscr{J}$ is a collection of
$O(\log (Mt))$ many real numbers in the interval
$\left[-(Mt)^{\varepsilon}C^{-1}\sqrt{NK/M_1},
(Mt)^{\varepsilon}C^{-1}\sqrt{NK/M_1}\right]$, and
$W_J$ is a smooth partition of unity such that
for $J=0$, the function
$W_0(x)$ is supported in $[-1,1]$ and satisfies
$W_0^{(\ell)}(x)\ll_\ell 1$, for each $J>0$ (resp. $J<0$), the function
$W_J(x)$ is supported in $[J,4J/3]$ (resp. $[4J/3,J]$) and satisfies
$y^\ell W_J^{(\ell)}(x)\ll_\ell 1$ for all $\ell\geq 0$, and finally
\bna
\sum_{J\in \mathscr{J}}W_J(x)=1, \quad \mbox{for}\quad
x\in \left[-\frac{(Mt)^{\varepsilon}}{C}\sqrt{\frac{NK}{M_1}},
\frac{(Mt)^{\varepsilon}}{C}\sqrt{\frac{NK}{M_1}}\right].
\ena

We conclude with the following.

\begin{lemma} Let $K$ be as in Lemma 7. We have
\bna
\mathcal {S}_1(N,C)
=\sum_{1\leq L<N^{1/2+\varepsilon}K^{3/2}M_1^{3/2}
\atop L \,\mathrm{dyadic}}\sum_{J \in \mathscr{J}}\sum_{\pm}
\mathcal {S}_1(N,C,L,J,\pm)+O\left((Mt)^{-100}\right),
\ena
where
\bna
\mathcal {S}_1(N,C,L,J,\pm)
&=&\frac{N^{3/2}}{\sqrt{MM_1^3}}
\mathop{\sum\sum}_{n_1^2n_2\sim L}\frac{\lambda(n_2,n_1)}{\sqrt{n_2}}
\mathop{\sum_{q=q_0M_2^k \sim C\atop(q_0,M)=1}}_{n_1|qM_1}
\frac{\chi_2(q_0)}{q^{3/2}}\sum_{Q<a\leq q+Q\atop (a,q)=1}\frac{1}{a}\nonumber\\
&&\sum_{1\leq |m|\leq q(Mt)^{1+\varepsilon}/N\atop m\equiv M_2\overline{a}\bmod q}
\overline{\chi_2}(m_0)
\mathscr{B}(n_1,\pm n_2,m,a,q)\mathcal {J}_{J,\pm}^{*}(q,m,n_1^2n_2),
\ena
where
\bea
\mathscr{B}(n_1,n_2,m,a,q)=\sideset{}{^*}\sum_{b\bmod M_1}
\overline{\chi_1}(\overline{qM_2}m-b)
S\left(\overline{\overline{aM_1}M_1+bq},n_2;\frac{qM_1}{n_1}\right)
\eea
and
\bea
\mathcal {J}_{J,\pm}^{*}(q,m,y)=\frac{1}{2\pi}\int_{\mathbb{R}}
\left(\frac{Ny}{q^3M_1^3}\right)^{-i\tau}
\gamma_{\pm}\left(-\frac{1}{2}+i\tau\right)\mathcal {J}^{**}(q,m,\tau)
W_J(\tau)\mathrm{d}\tau
\eea
with
\bea
\mathcal {J}^{**}(q,m,\tau)&=&\int_{\mathbb{R}}\int_0^1
V\left(v\right)V^{\dag}\left(\frac{N \zeta}{aqM_1},\frac{1}{2}+i(Kv-\tau)\right)\nonumber\\
&&U^{\dag}\left(\frac{N(ma-\zeta M_2)}{aqM},1-i(t+Kv)\right)\mathrm{d}v\mathrm{d}\zeta.
\eea
\end{lemma}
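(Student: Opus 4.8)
The plan is to unwind the definition of $\mathcal{S}_1(N,C)$ from Lemma~7 in two stages: first apply the Voronoi formula (Lemma~1) to the $n$-sum exactly as set up in the display preceding this lemma, then organise the resulting $\tau$-integral into dyadic and partition-of-unity pieces. First I would substitute the Voronoi expansion
$$
\sum_{n}\lambda(1,n)e\!\left(\tfrac{(\overline{aM_1}M_1+bq)n}{qM_1}\right)n^{iv}V\!\left(\tfrac nN\right)e\!\left(-\tfrac{n\zeta}{aqM_1}\right)
=qM_1N^{iv}\sum_{\pm}\sum_{n_1\mid qM_1}\sum_{n_2}\frac{\lambda(n_2,n_1)}{n_1n_2}S(\cdots)\,\mathcal{J}_{\pm}(\cdots)
$$
into the formula for $\mathcal{S}_1(N,C)$. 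The prefactor $qM_1$ cancels the $1/q$ and combines with the remaining $N/(KM_1\sqrt M)$ and with the normalisation $N^{3/2}/\sqrt{MM_1^3}$ after one extracts $N^{3/2}\cdot n_1^{-2}n_2^{-1}\cdot(Nn_1^2n_2/q^3M_1^3)^{1/2}$ from inside $\mathcal{J}_\pm$ once the contour is at $\sigma=-1/2$; bookkeeping the powers of $n_1,n_2,q,M_1$ gives precisely the $\lambda(n_2,n_1)/\sqrt{n_2}$ and $q^{-3/2}$ weights appearing in $\mathcal{S}_1(N,C,L,J,\pm)$, and the factor $N^{iv}\cdot N^{-iv}$ from $\phi$ and from the outer integrand cancels.

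Next I would justify the two truncations. The bound $\mathcal{J}_{\pm}(n_1^2n_2/q^3M_1^3,\zeta/aqM_1)\ll (q^{-1}\sqrt{NK/M_1})^{5/2}(n_1^2n_2/N^{1/2}K^{3/2}M_1^{3/2})^{-\ell}$, obtained by shifting to $\sigma=\ell$ and using $(2.2)$ together with the decay of $V^\dag$ from $(2.6)$ with $j=3\ell+3$, shows the tail $n_1^2n_2\ge N^{1/2+\varepsilon}K^{3/2}M_1^{3/2}$ contributes negligibly; here one checks that $\sum\sum_{n_1^2n_2\sim L}|\lambda(n_2,n_1)|\ll L^{1+\varepsilon}$ by $(2.1)$ and Cauchy--Schwarz, so a single power of $\ell$ large enough kills the sum over dyadic $L$ as well as the finitely many other polynomial factors. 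Then for $n_1^2n_2< N^{1/2+\varepsilon}K^{3/2}M_1^{3/2}$ I shift the contour to $\sigma=-1/2$, insert the smooth partition $\sum_{J\in\mathscr J}W_J(\tau)=1$ on the support dictated by the decay of $V^\dag\big(N\zeta/aqM_1,\tfrac12+i(v-\tau)\big)$ — which forces $|v-\tau|\ll (Mt)^\varepsilon C^{-1}\sqrt{NK/M_1}$, whence $\tau$ ranges over the stated interval since $|v|\le K$ and $K\le C^{-1}\sqrt{NK/M_1}$ follows from $C\le Q=\sqrt{N/KM_1}$ — and truncate $|\tau|$ there at the cost of $O((Mt)^{-1000})$. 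Writing $v=Kv'$ in the outer $V(v/K)$ integral turns $V^\dag(\cdot,\tfrac12+i(v-\tau))$ into $V^\dag(\cdot,\tfrac12+i(Kv'-\tau))$ and $U^\dag(\cdot,1-i(t+v))$ into $U^\dag(\cdot,1-i(t+Kv'))$, which are exactly the integrands of $\mathcal{J}^{**}(q,m,\tau)$ in $(4.4)$; collecting the $\tau$-integral with $(Ny/q^3M_1^3)^{-i\tau}\gamma_\pm(-\tfrac12+i\tau)W_J(\tau)$ reproduces $\mathcal{J}^{*}_{J,\pm}(q,m,y)$ in $(4.3)$. Finally, the $b$-sum $\sum^*_{b\bmod M_1}\overline{\chi_1}(\overline{qM_2}m-b)S(\overline{\overline{aM_1}M_1+bq},\pm n_2;qM_1/n_1)$ is exactly the character sum $\mathscr{B}(n_1,\pm n_2,m,a,q)$ of $(4.2)$, and $\overline{\chi_2}(m_0)$ carries over verbatim from Lemma~7.

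The main obstacle is not any deep estimate but keeping the bookkeeping of all the $e(\cdots)$ phases and modular inverses consistent through the Voronoi step: the argument $(\overline{aM_1}M_1+bq)/qM_1$ must be split correctly into its $\bmod q$ and $\bmod M_1$ parts so that the Voronoi additive character $e(an/q)$ in Lemma~1 is applied with the right residue, producing the Kloosterman sum with modulus $qM_1/n_1$ and the inverse $\overline{\overline{aM_1}M_1+bq}$ seen in $(4.2)$; one must also track how the condition $n_1\mid qM_1$ interacts with the factorisation $q=q_0M_2^k$ so that $n_1$ and $C$ remain independent parameters in the final sum. Everything else — the contour shifts, the partition of unity, the truncations — is a routine repetition of the corresponding steps in \cite{Munshi32}, using $(2.1)$, $(2.2)$ and $(2.6)$ as quoted.
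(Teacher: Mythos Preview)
Your proposal is correct and follows essentially the same approach as the paper: apply the Voronoi formula (Lemma~1) to the $n$-sum in $\mathcal{S}_1(N,C)$, truncate $n_1^2n_2$ by shifting the contour in $\mathcal{J}_\pm$ to $\sigma=\ell$ and invoking (2.2) and (2.6), then shift to $\sigma=-1/2$, insert the partition of unity $\{W_J\}$, and rescale $v\mapsto Kv$ to produce $\mathcal{J}^{**}$. The bookkeeping of prefactors you sketch (cancellation of $q$, of $N^{iv}N^{-iv}$, extraction of $(Nn_1^2n_2/q^3M_1^3)^{1/2}$ at $\sigma=-1/2$, and absorption of $1/K$ by the Jacobian of $v\mapsto Kv$) matches the paper's computation exactly; the only trivial slip is that $v\in[K,2K]$ rather than $|v|\le K$, which is harmless for the $\tau$-localisation since the $(Mt)^\varepsilon$ absorbs the constant.
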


Let $n=n_1'l$, $(n_1',M_1)=1$ and $l|M_1$. For $M_1=1$, by Weil's bound
for Klooertman sums $\mathscr{B}(n_1'M_1,n_2,m,a,q)\ll (q/ n_1)^{1/2}.$
Trivially, we have $
\mathcal {J}_{J,\pm}^{*}(q,m,n_1'^2M_1^2n_2)\ll C^{-1}\sqrt{NK/M_1t}.$
Thus the contribution from $l=M_1$ to $\mathcal {S}_1(N,C,L,J,\pm)$
is at most $N^{3/4}K^{7/4}(Mt)^{1/2}M_1^{-5/4}$ which is admissible
by the range of $M_1$. For $l=1$, we will need extra cancelation from
the character sum $\mathscr{B}(n_1,n_2,m,a,q)$ and
the integral $\mathcal {J}_{J,\pm}^{*}(q,m,n_1'^2M_1^2n_2)$.
Then the rest of the paper is devoted to estimating
\bea
\mathcal {S}_1^*(N,C,L,J,\pm)&=&\frac{N^{3/2}}{\sqrt{MM_1^3}}
\mathop{\sum\sum}_{n_1^2n_2\sim L}\frac{\lambda(n_2,n_1)}{\sqrt{n_2}}
\mathop{\sum_{q=q_0M_2^k \sim C\atop(q_0,M)=1}}_{n_1|q}
\frac{\chi_2(q_0)}{q^{3/2}}\sum_{Q<a\leq q+Q\atop (a,q)=1}\frac{1}{a}\nonumber\\
&&\sum_{1\leq |m|\leq q(Mt)^{1+\varepsilon}/N\atop m\equiv M_2\overline{a}\bmod q}
\overline{\chi_2}(m_0)
\mathscr{B}(n_1,\pm n_2,m,a,q)\mathcal {J}_{J,\pm}^{*}(q,m,n_1^2n_2).
\eea

\section{A decomposition of the integral $\mathcal {J}^{**}(q,m,\tau)$}
\setcounter{equation}{0}
\medskip

The aim of this section is to give a decomposition of $\mathcal {J}^{**}(q,m,\tau)$
for $|\tau|\leq (Mt)^{\varepsilon}C^{-1}\sqrt{NK/M_1}$.

\noindent{\bf 5.1. Stationary phase expansion for $U^{\dag}$ and $V^{\dag}$.}
Applying (2.7) we get
\bna
&&U^{\dag}\left(\frac{N(ma-\zeta M_2)}{aqM},1-i(t+Kv)\right)
=\frac{e\left(1/8\right)}{\sqrt{2\pi}}\frac{aqM\sqrt{t+Kv}}{N(\zeta M_2-ma)}
U^*\left(\frac{(t+Kv)aqM}{2\pi N(\zeta M_2-ma)}\right)\nonumber\\
&&\qquad\qquad
\left(\frac{(t+Kv)aqM}{2\pi e N(\zeta M_2-ma)}\right)^{-i(t+Kv)}
+O\left(t^{-5/2}\right).
\ena
By (2.5) we have
\bna
&&V^{\dag}\left(\frac{N \zeta}{aqM_1},\frac{1}{2}+i(Kv-\tau)\right)\nonumber\\
&=&\frac{e\left(1/8\right)}{\sqrt{\tau-Kv}}
V\left(\frac{(Kv-\tau)aqM_1}{2\pi N \zeta}\right)
\left(\frac{(Kv-\tau)aqM_1}{N \zeta}\right)^{1/2}
\left(\frac{(Kv-\tau)aqM_1}{2\pi e N \zeta}\right)^{i(Kv-\tau)}\nonumber
\\&&+O\left(\min\left\{|Kv-\tau|^{-3/2},
\left(\frac{N \zeta}{aqM_1}\right)^{-3/2}\right\}\right).
\ena
Plugging the above asymptotic expansions into (4.4) we obtain
\bea
\mathcal {J}^{**}(q,m,\tau)
&=&c_1M_2
\left(\frac{aqM_1}{N}\right)^{3/2}
\int_{\mathbb{R}}\int_0^1V\left(v\right)
\frac{\sqrt{t+Kv}}{\zeta^{1/2}(\zeta M_2-ma)}U^*\left(\frac{(t+Kv)aqM}{2\pi N(\zeta M_2-ma)}\right)
\nonumber\\
&&\left(\frac{(t+Kv)aqM}{2\pi e N(\zeta M_2-ma)}\right)^{-i(t+Kv)}
V\left(\frac{(Kv-\tau)aqM_1}{2\pi N \zeta}\right)\nonumber\\
&&
\left(\frac{(Kv-\tau)aqM_1}{2\pi e N \zeta}\right)^{i(Kv-\tau)}\mathrm{d}v\mathrm{d}\zeta
+O\left(t^{-5/2}+E^{**}\right)
\eea
for some absolute constant $c_1$, where
\bna
E^{**}=\frac{1}{\sqrt{t}}
\int_0^1\int_1^2\min\left\{|Kv-\tau|^{-3/2},
\left(\frac{N \zeta}{aqM_1}\right)^{-3/2}\right\}\mathrm{d}v\mathrm{d}\zeta.
\ena
To estimate the error term $E^{**}$, we split the integral over $v$
into two pieces: $|Kv-\tau|<N \zeta/aqM_1$ and $|Kv-\tau|\geq N \zeta/aqM_1$
as in \cite{Munshi32} to get
\bna
E^{**}\ll \frac{(M t)^{\varepsilon}}{t^{1/2}K^{3/2}}
\min\left\{1,\frac{10K}{|\tau|}\right\}.
\ena
We also note that by our choice $K$ in (1.5) and
$|\tau|\leq (Mt)^{\varepsilon}C^{-1}\sqrt{NK/M_1}$, we have
\bna
t^{-5/2}\ll\frac{(M t)^{\varepsilon}}{t^{1/2}K^{3/2}}
\min\left\{1,\frac{10K}{|\tau|}\right\}.
\ena

\medskip

{\bf \noindent 5.2. Stationary phase expansion for the $v$-integral.}
Now we will study the integral over $v$ in (5.1). Note that the weight function
restricts the $v$-integral
to a range of length $(Mt)^{\varepsilon}N \zeta/aqKM_1$. Thus for $\zeta<K^{-1}$
we can bound the integral over $v$ trivially to get the bound
$(Mt)^{\varepsilon}t^{-1/2}K^{-5/2}
\left(N/aqM_1\right)^{1/2}.$
Denote by
$\mathcal {I}^{**}(q,m,\tau)$ the integral in (5.1). Then
\bea
\mathcal {I}^{**}(q,m,\tau)=c_1
\left(\frac{aqM_1}{Nt}\right)^{1/2}
\int_{K^{-1}}^1
\int_{\mathbb{R}}g(v)e(f(v))\mathrm{d}v\frac{\mathrm{d}\zeta}{\sqrt{\zeta}}
+O\left(\frac{(Mt)^{\varepsilon}}{t^{1/2}K^{5/2}}
\left(\frac{N}{aqM_1}\right)^{1/2}\right),
\eea
where
\bna
g(v)=\frac{aqM\sqrt{t(t+Kv)}}{N(\zeta M_2-ma)}
U^*\left(\frac{(t+Kv)aqM}{2\pi N(\zeta M_2-ma)}\right)
V\left(\frac{(Kv-\tau)aqM_1}{2\pi N \zeta}\right)V(v)
\ena
and
\bna
f(v)=-\frac{t+Kv}{2\pi}\log\frac{(t+Kv)aqM}{2\pi eN(\zeta M_2-ma)}
+\frac{Kv-\tau}{2\pi}\log \frac{(Kv-\tau)aqM_1}{2\pi eN\zeta}.
\ena
By explicit computations,
\bna
f'(v)
=\frac{K}{2\pi}\log \frac{(Kv-\tau)(\zeta M_2-ma)}{(t+Kv)\zeta M_2}.
\ena
and for $j\geq 2$,
\bna
f^{(j)}(v)=\frac{(-1)^j(j-2)!}{2\pi}\left(\frac{K^j}{(Kv-\tau)^{j-1}}-
\frac{K^j}{(Kv+t)^{j-1}}\right).
\ena
The stationary phase is given by
\bna
v_0=\frac{(t+\tau)M_2 \zeta-\tau ma}{-Kma}.
\ena
In the support of the integral, we have
\bna
g^{(j)}(v)\ll \left(1+\frac{aqKM_1}{N \zeta}\right)^j, \qquad j\geq 0.
\ena
and by the range of $K$,
\bna
f^{(j)}(v)
\asymp \frac{N \zeta}{aqM_1}\left(\frac{aqKM_1}{N \zeta}\right)^{j}, \qquad j\geq 2.
\ena
Moreover, if $v_0\not\in [0.5,3]$, then in the support of the integral we also have
\bna
f'(v)
&=&\frac{K}{2\pi}\log \left(1+\frac{K(v_0-v)}{t+Kv}\right)
-\frac{K}{2\pi}\log \left(1+\frac{K(v_0-v)}{Kv-\tau}\right)\\
&\asymp&K\log \left(1+\frac{K(v_0-v)}{Kv-\tau}\right)
\gg K\min\left\{1,\frac{aqKM_1}{N\zeta}\right\}.
\ena

According as the lower bound of $f'(v)$, we distinguish two cases.

\medskip

{\bf Case a.} $N \zeta/aqKM_1\geq 1$. If $v_0\not\in [0.5,3]$, then the length of the integral is $b-a=1$.
Applying Lemma 2 (1) with
\bna
\Theta_f=\frac{N\zeta}{aqM_1}, \quad \Omega_f=\frac{N\zeta}{aqKM_1}, \quad
\Omega_g=1 \quad \mbox{and}\quad \Lambda=\frac{aqK^2M_1}{N\zeta},
\ena
we obtain
\bna
\int_{\mathbb{R}}g(v)e(f(v))\mathrm{d}v
\ll\frac{1}{K^2}\left(\frac{N}{aqKM_1}\right)^3.
\ena
If $v_0\in [0.5,3]$, then treating the integral as a finite integral
over the range $[0.1,4]$ and applying Lemma 2 (2), it follows
that
\bna
\int_{\mathbb{R}}g(v)e(f(v))\mathrm{d}v=\frac{g(v_0)e\left(f(v_0)+1/8\right)}{\sqrt{f''(v_0)}}
+O\left(\left(\frac{N}{aqK^2M_1}\right)^{3/2}\right).
\ena
Thus for $K$ as in (1.5), we have
\bea
&&\left(\frac{aqM_1}{Nt}\right)^{1/2}
\int_{K^{-1}}^11_{\frac{N \zeta}{aqKM_1}\geq 1}
\int_{\mathbb{R}}g(v)e(f(v))\mathrm{d}v\frac{\mathrm{d}\zeta}{\sqrt{\zeta}}\nonumber\\
&=&
\left(\frac{aqM_1}{Nt}\right)^{1/2}
\int_{K^{-1}}^11_{\frac{N \zeta}{aqKM_1}\geq 1}
\frac{g(v_0)e\left(f(v_0)+1/8\right)}{\sqrt{f''(v_0)}}
\frac{\mathrm{d}\zeta}{\sqrt{\zeta}}
+O\left(\frac{N}{aqK^3M_1\sqrt{t}}\right),
\eea
where $1_{S}=1$ if $S$ is true, and is 0
otherwise, which denotes the characteristic function of the set $S$.

\medskip

{\bf Case b.} $N \zeta/aqKM_1<1$. In this case
$[a,b]=\left[\tau/K-2\pi N\zeta/aqKM_1,
\tau/K+4\pi N\zeta/aqKM_1\right]$ and we apply
Lemma 2 with
\bna
&&\Theta_f=\frac{N\zeta}{aqM_1},\quad
\Omega_f=\frac{N\zeta}{aqKM_1},\quad
\Omega_g=\frac{N\zeta}{aqKM_1} \quad \mbox{and} \quad
\Lambda=K.
\ena
If $v_0\not\in [a,b]$, then
\bna
\int_{\mathbb{R}}g(v)e(f(v))\mathrm{d}v\ll\frac{1}{K^2\Omega_f}.
\ena
If $v_0\in [a,b]$, treating the integral as a finite
integral over $[\tau/K-3\pi N\zeta/aqKM_1,$
$\tau/K+5\pi N\zeta/aqKM_1]$, then
\bna
\int_{\mathbb{R}}g(v)e(f(v))\mathrm{d}v=\frac{g(v_0)e\left(f(v_0)+1/8\right)}{\sqrt{f''(v_0)}}
+O\left(\frac{1}{K^2\Omega_f}+
\frac{1}{K^{3/2}\Omega_f^{1/2}}\right).
\ena
Recall that $\zeta>K^{-1}$. We have $\Omega_f>K^{-1}$ and
the $O$-term above is at most $K^{-1}\sqrt{aqM_1/N \zeta}$. Thus
\bea
&&\left(\frac{aqM_1}{Nt}\right)^{\frac{1}{2}}
\int_{K^{-1}}^11_{\frac{N \zeta}{aqKM_1}< 1}
\int_{\mathbb{R}}g(v)e(f(v))\mathrm{d}v\frac{\mathrm{d}\zeta}{\sqrt{\zeta}}\nonumber\\
&=&
\left(\frac{aqM_1}{Nt}\right)^{1/2}
\int_{K^{-1}}^11_{\frac{N \zeta}{aqKM_1}< 1}
\frac{g(v_0)e\left(f(v_0)+1/8\right)}{\sqrt{f''(v_0)}}
\frac{\mathrm{d}\zeta}{\sqrt{\zeta}}
+O\left(\frac{aqM_1}{KN\sqrt{t}}\right).
\eea
Note that the $O$-terms in (5.2) and (5.4) are dominated by the $O$-term in (5.3).
By (5.2)-(5.4) we obtain
\bea
\mathcal {I}^{**}(q,m,\tau)=c_1
\left(\frac{aqM_1}{Nt}\right)^{1/2}
\int_{K^{-1}}^1
\frac{g(v_0)e\left(f(v_0)+1/8\right)}{\sqrt{f''(v_0)}}
\frac{\mathrm{d}\zeta}{\sqrt{\zeta}}
+O\left(\frac{N}{aqK^3M_1\sqrt{t}}\right).
\eea

Finally we compute the main term. We have
\bna
f(v_0)=-\frac{t+\tau}{2\pi}\log \left(\frac{-(t+\tau)qM}{2\pi eN m}\right), \qquad
f''(v_0)=\frac{(Kma)^2}{2\pi (t+\tau)(\zeta M_2-ma)\zeta M_2}
\ena
and
\bna
g(v_0)
=\frac{aqM}{N}\left(\frac{-t(t+\tau)}{ma(\zeta M_2-ma)}\right)^{1/2}
V\left(\frac{(t+\tau)qM}{-2\pi Nm}\right)U^*\left(\frac{(t+\tau)qM}{-2\pi Nm}\right)
V\left(\frac{\tau}{K}-\frac{(t+\tau)M_2\zeta}{Kma}\right),
\ena
Plugging these into (5.5) we have
\bna
\mathcal {I}^{**}(q,m,\tau)&=&c_2\frac{t+\tau}{K}
\left(\frac{qM}{-mN}\right)^{3/2}
V\left(\frac{(t+\tau)qM}{-2\pi Nm}\right)U^*\left(\frac{(t+\tau)qM}{-2\pi Nm}\right)\\
&&\left(-\frac{(t+\tau)qM}{2\pi eNm}\right)^{-i(t+\tau)}\int_{K^{-1}}^1
V\left(\frac{\tau}{K}-\frac{(t+\tau)M_2\zeta}{Kma}\right)\mathrm{d}\zeta
+O\left(\frac{N}{aqK^3M_1\sqrt{t}}\right)
\ena
for some absolute constant $c_2$.
Extending the integral to the interval $[0,1]$ at a cost of an error term dominated
by the $O$-term in (5.1), we conclude the following.

\begin{lemma}
We have
\bna
\mathcal {J}^{**}(q,m,\tau)=\mathcal {J}_1(q,m,\tau)+
\mathcal {J}_2(q,m,\tau),
\ena
where
\bea
\mathcal {J}_1(q,m,\tau)&=&\frac{c_3}{K\sqrt{t+\tau}}
\left(-\frac{(t+\tau)qM}{2\pi eNm}\right)^{3/2-i(t+\tau)}
V\left(\frac{(t+\tau)qM}{-2\pi Nm}\right)
\nonumber\\
&&U^*\left(\frac{(t+\tau)qM}{-2\pi Nm}\right)\int_0^1V\left(\frac{\tau}{K}-\frac{(t+\tau)M_2\zeta}{Kma}\right)\mathrm{d}\zeta,
\eea
and
\bea
\mathcal {J}_2(q,m,\tau)=\mathcal {J}^{**}(q,m,\tau)-\mathcal {J}_1(q,m,\tau)
=O\left(\mathcal {B}(C,\tau)(Mt)^{\varepsilon}\right),
\eea
where
\bea
\mathcal {B}(C,\tau)=\frac{1}{t^{1/2}K^{3/2}}
\min\left\{1,\frac{10K}{|\tau|}\right\}+
\frac{N^{1/2}}{t^{1/2}K^{5/2}M_1^{1/2}C}.
\eea
\end{lemma}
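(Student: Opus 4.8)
The idea is to feed the stationary-phase expansions of the special integrals $U^{\dag}$ and $V^{\dag}$ into the definition (4.4) of $\mathcal{J}^{**}(q,m,\tau)$, reduce the remaining $v$-integral to the shape $\int g(v)e(f(v))\,\mathrm{d}v$ covered by Lemma 2, apply stationary phase once more, and finally evaluate everything at the stationary point to recognize the clean main term $\mathcal{J}_1$.

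\textbf{Step 1: opening up $U^{\dag}$ and $V^{\dag}$.} Apply the precise asymptotic (2.7) of Lemma 5 to $U^{\dag}\!\big(\tfrac{N(ma-\zeta M_2)}{aqM},1-i(t+Kv)\big)$ and the asymptotic (2.5) of Lemma 4 to $V^{\dag}\!\big(\tfrac{N\zeta}{aqM_1},\tfrac12+i(Kv-\tau)\big)$. The leading terms produce explicit oscillatory factors together with the smooth amplitudes $U^*$, $V$; the two $O$-terms merge into a single error $E^{**}$. Splitting the $v$-range into the pieces where $|Kv-\tau|$ is smaller, respectively larger, than $N\zeta/aqM_1$ (as in \cite{Munshi32}) yields $E^{**}\ll (Mt)^\varepsilon t^{-1/2}K^{-3/2}\min\{1,10K/|\tau|\}$, and the choice (1.5) of $K$ together with $|\tau|\le (Mt)^\varepsilon C^{-1}\sqrt{NK/M_1}$ shows this also absorbs the leftover $t^{-5/2}$. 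This is exactly the first term of $\mathcal{B}(C,\tau)$.

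\textbf{Step 2: the $v$-integral.} For $\zeta<K^{-1}$ the weight confines $v$ to an interval of length $\ll (Mt)^\varepsilon N\zeta/aqKM_1$, so estimating trivially gives a contribution of size $(Mt)^\varepsilon t^{-1/2}K^{-5/2}(N/aqM_1)^{1/2}$, matching the second term of $\mathcal{B}(C,\tau)$. For $\zeta\ge K^{-1}$ one is left with $\int g(v)e(f(v))\,\mathrm{d}v$; I would compute $f'(v)=\tfrac{K}{2\pi}\log\tfrac{(Kv-\tau)(\zeta M_2-ma)}{(t+Kv)\zeta M_2}$, the higher derivatives $f^{(j)}$ for $j\ge 2$, and the stationary point $v_0=\tfrac{(t+\tau)M_2\zeta-\tau ma}{-Kma}$, checking $f^{(j)}(v)\asymp\tfrac{N\zeta}{aqM_1}\big(\tfrac{aqKM_1}{N\zeta}\big)^j$, $g^{(j)}(v)\ll(1+aqKM_1/N\zeta)^j$, and the lower bound $|f'(v)|\gg K\min\{1,aqKM_1/N\zeta\}$ for $v$ away from $v_0$. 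Then split into Case a ($N\zeta/aqKM_1\ge 1$, a $v$-interval of length $1$) and Case b ($N\zeta/aqKM_1<1$, a shorter interval). When $v_0$ lies outside the relevant interval, Lemma 2(1) gives a negligible contribution; when $v_0$ lies inside, pass to a slightly enlarged fixed interval and apply Lemma 2(2) (after verifying $f^{(2)}\gg\Theta_f/\Omega_f^2$) to pull out the main term $g(v_0)e(f(v_0)+1/8)/\sqrt{f''(v_0)}$, the stationary-phase remainders in both cases being $\ll N/(aqK^3M_1\sqrt{t})$, again absorbed into $\mathcal{B}(C,\tau)$.

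\textbf{Step 3: evaluating the main term and the obstacle.} Substituting $v_0$ one gets $f(v_0)=-\tfrac{t+\tau}{2\pi}\log\!\big(\tfrac{-(t+\tau)qM}{2\pi eNm}\big)$, $f''(v_0)=\tfrac{(Kma)^2}{2\pi(t+\tau)(\zeta M_2-ma)\zeta M_2}$, and the displayed $g(v_0)$; the key point is that the resulting oscillatory factor $\big(-\tfrac{(t+\tau)qM}{2\pi eNm}\big)^{-i(t+\tau)}$ is free of $a$ and $\zeta$, while $\sqrt{f''(v_0)}$ cancels the $a$- and $\zeta$-weights down to the residual $V\!\big(\tfrac{\tau}{K}-\tfrac{(t+\tau)M_2\zeta}{Kma}\big)$. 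Collecting terms, the main term is $c_1(aqM_1/Nt)^{1/2}\int_{K^{-1}}^1 g(v_0)e(f(v_0)+1/8)f''(v_0)^{-1/2}\,\mathrm{d}\zeta/\sqrt\zeta$, which becomes $\mathcal{J}_1(q,m,\tau)$ once the $\zeta$-integral is extended to $[0,1]$ (the tail is dominated by the error already present), and putting $\mathcal{J}_2=\mathcal{J}^{**}-\mathcal{J}_1$ bundles every remaining error into $O(\mathcal{B}(C,\tau)(Mt)^\varepsilon)$. The genuine difficulty is the bookkeeping: one must confirm in each sub-case that Lemma 2's hypotheses (notably $\Theta_f,\Omega_f\gg b-a$ and the non-vanishing / lower bounds on $f',f''$) are met, and that each of the several error terms — from $U^{\dag}$, from $V^{\dag}$, from the range $\zeta<K^{-1}$, and from the stationary-phase remainders — is truly majorized by $\mathcal{B}(C,\tau)(Mt)^\varepsilon$; this succeeds precisely because $K$ is calibrated as in (1.5).
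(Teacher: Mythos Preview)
Your proposal is correct and follows essentially the same route as the paper's own argument in Section~5: expand $U^{\dag}$ by (2.7) and $V^{\dag}$ by (2.5), control the resulting error $E^{**}$ by the split $|Kv-\tau|\lessgtr N\zeta/aqM_1$, treat $\zeta<K^{-1}$ trivially, and for $\zeta\ge K^{-1}$ run Lemma~2 in the two regimes $N\zeta/aqKM_1\gtrless 1$ before evaluating at the stationary point $v_0$ and extending the $\zeta$-integral back to $[0,1]$. The only cosmetic difference is that in the paper the dominant secondary error is the stationary-phase remainder $N/(aqK^3M_1\sqrt{t})$ from Case~a (equation (5.3)), which after inserting $a\asymp Q$, $q\sim C$ becomes the second term of $\mathcal{B}(C,\tau)$, whereas you attribute that term directly to the $\zeta<K^{-1}$ range; both are bounded by the same quantity, so this is immaterial.
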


\medskip

\section{Estimating $\mathcal {S}^{\sharp}(N)$--II}
\setcounter{equation}{0}
\medskip

Denote by $\mathcal {J}_{\ell,J,\pm}(q,m,n_1^2n_2)$ and
$\mathcal {S}_{1,\ell}(N,C,L,J,\pm)$ the contribution of $\mathcal {J}_\ell(q,m,\tau)$
to $\mathcal {J}_{J,\pm}^{*}(q,m,n_1^2n_2)$ in (4.3) and
$\mathcal {S}_{1}^*(N,C,L,J,\pm)$ in (4.5), respectively.

\medskip

\noindent{\bf 6.1. Estimating $\mathcal {S}_{1,1}(N,C,L,J,\pm)$.}
By Cauchy inequality and (2.1), $\mathcal {S}_{1,1}(N,C,L,J,\pm)$ is bounded by
\bea
&&\frac{N^{3/2}}{\sqrt{MM_1^3}}\sum_{0\leq k\leq \log C}
\mathop{\sum\sum}_{n_1^2n_2\sim L}\frac{|\lambda(n_2,n_1)|}{\sqrt{n_2}}
\left|\mathop{\sum_{q=q_0M_2^k \sim C\atop(q_0,M)=1}}_{n_1|q}
\frac{\chi_2(q_0)}{q^{3/2}}\sum_{Q<a\leq q+Q\atop (a,q)=1}\frac{1}{a}\right.\nonumber\\
&&\left.\sum_{1\leq |m|\leq q(Mt)^{1+\varepsilon}/N\atop m\equiv M_2\overline{a}\bmod q}
\overline{\chi_2}(m_0)
\mathscr{B}(n_1,\pm n_2,m,a,q)\mathcal {J}_{1,J,\pm}(q,m,n_1^2n_2)\right|\nonumber\\
&\leq&\sqrt{\frac{N^3L}{M_1^3M}}\sum_{0\leq k\leq \log C}\sqrt{\mathcal {T}(k)},
\eea
where temporarily,
\bna
\mathcal {T}(k)&=&\sum_{n_1}\sum_{n_2}\frac{1}{n_2}W\left(\frac{n_1^2n_2}{L}\right)
\left|\mathop{\sum_{q=q_0M_2^k \sim C\atop(q_0,M)=1}}_{n_1|q}
\frac{\chi_2(q_0)}{q^{3/2}}\sum_{Q<a\leq q+Q\atop (a,q)=1}\frac{1}{a}\right.\nonumber\\
&&\left.\sum_{1\leq |m|\leq q(Mt)^{1+\varepsilon}/N\atop m\equiv M_2\overline{a}\bmod q}
\overline{\chi_2}(m_0)
\mathscr{B}(n_1,\pm n_2,m,a,q)\mathcal {J}_{1,J,\pm}(q,m,n_1^2n_2)\right|^2
\ena
with $W$ a smooth function supported on $[1/2,3]$, which equals 1 on $[1,2]$
and satisfies $W^{(\ell)}(x)\ll_\ell 1$.
Opening the absolute square and interchanging the order of summations we get
\bea
\mathcal {T}(k)
&=&\sum_{n_1\leq \sqrt{3L}}
\mathop{\sum_{q=q_0M_2^k \sim C\atop(q_0,M)=1}}_{n_1|q}
\frac{\chi_2(q_0)}{q^{3/2}}\sum_{Q<a\leq q+Q\atop (a,q)=1}\frac{1}{a}
\sum_{1\leq |m|\leq q(Mt)^{1+\varepsilon}/N\atop m\equiv M_2\overline{a}\bmod q}
\overline{\chi_2}(m_0)\nonumber\\
&&\mathop{\sum_{q'=q_0'M_2^k \sim C\atop(q_0',M)=1}}_{n_1|q'}
\frac{\overline{\chi_2}(q'_0)}{q'^{3/2}}\sum_{Q<a'\leq q'+Q\atop (a',q')=1}\frac{1}{a'}
\sum_{1\leq |m'|\leq q'(Mt)^{1+\varepsilon}/N
\atop m'\equiv M_2\overline{a'}\bmod q'}\chi_2(m_0')
T^*,
\eea
where
\bna
T^*&=&\sum_{n_2}\frac{1}{n_2}W\left(\frac{n_1^2n_2}{L}\right)
\mathcal {J}_{1,J,\pm}(q,m,n_1^2n_2)
\overline{\mathcal {J}_{1,J,\pm}(q',m',n_1^2n_2)}\nonumber\\
&&\mathscr{B}(n_1,\pm n_2,m,a,q)
\overline{\mathscr{B}(n_1,\pm n_2,m',a',q')}.
\ena

Denote $\widehat{q}=q/n_1$.
Then $\mathscr{B}(n_1,n_2,m,a,q)$ in (4.2) is
\bna
\mathscr{B}(n_1,n_2,m,a,q)
=\chi_1(q)S(a\overline{M_1}, n_2\overline{M_1};\widehat{q})
\sideset{}{^*}\sum_{b\bmod M_1}\overline{\chi_1}(m\overline{M_2}-b)
S(\overline{b\widehat{q}}, n_2\overline{\widehat{q}};M_1).
\ena
Applying Poisson summation formula with modulus $\widehat{q}\widehat{q'}M_1$
we obtain
\bea
T^*=\frac{n_1^2}{qq'M_1}\sum_{n_2\in \mathbb{Z}}
\mathscr{C}^*(n_2)\mathcal {I}^*(n_2),
\eea
where
\bea
\mathscr{C}^*(n_2)=
\sum_{c\bmod \widehat{q}\widehat{q'}M_1}
\mathscr{B}(n_1,c,m,a,q)\overline{\mathscr{B}(n_1,c,m',a',q')}
e\left(\frac{n_2 c}{\widehat{q}\widehat{q'}M_1}\right)
\eea
and
\bea
\mathcal {I}^*(n_2)=\int_{\mathbb{R}}W\left(y\right)
\mathcal {J}_{1,J,\pm}(q,m,Ly)
\overline{\mathcal {J}_{1,J,\pm}(q',m',Ly)}
e\left(-\frac{n_2Ly}{qq'M_1}\right)\frac{\mathrm{d}y}{y}.
\eea

\begin{lemma} We have
$\mathcal {I}^*(n_2)$ is arbitrarily small
unless
$|n_2|\leq (Mt)^{\varepsilon} C\sqrt{NKM_1}/L$ and
\bna
\mathcal {I}^*(n_2)\ll (Mt)^{\varepsilon}B^{*}(n_2),
\ena
where $B^{*}(n_2)$ is given by
\bna
B^{*}(n_2)=\left\{\begin{array}{ll}
\frac{N^{1/2}}{tK^{3/2}M_1^{1/2}C},&\mbox{if}\;n_2=0,\\
\\
\frac{N^{1/2}}{tK^{3/2}(|n_2|L)^{1/2}},
&\mbox{if}\;n_2\neq 0.
\end{array}\right.
\ena
\end{lemma}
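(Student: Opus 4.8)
The goal is to analyze the integral $\mathcal{I}^*(n_2)$ defined in (6.6), using the explicit form of $\mathcal{J}_{1,J,\pm}(q,m,Ly)$, which in turn comes from plugging the main term $\mathcal{J}_1(q,m,\tau)$ from (5.6) into the $\tau$-integral (4.3). First I would substitute the expression for $\mathcal{J}_{1,J,\pm}$: it has the shape
$$
\mathcal{J}_{1,J,\pm}(q,m,Ly)=\frac{1}{2\pi}\int_{\mathbb R}\left(\frac{NLy}{q^3M_1^3}\right)^{-i\tau}\gamma_{\pm}\!\left(-\tfrac12+i\tau\right)\mathcal{J}_1(q,m,\tau)W_J(\tau)\,\mathrm d\tau,
$$
and $\mathcal{J}_1(q,m,\tau)$ carries a factor $\big(-(t+\tau)qM/2\pi eNm\big)^{3/2-i(t+\tau)}$ together with smooth cutoffs $V,U^*$ localizing $(t+\tau)qM/(-2\pi Nm)$ to size $\asymp 1$ — equivalently $|m|\asymp (t+\tau)qM/N\asymp tqM/N$, and similarly for the primed variables. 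Using (2.3) to replace $\gamma_{\pm}(-\tfrac12+i\tau)$ by $(|\tau|/e\pi)^{3i\tau}\Psi_{\pm}(\tau)$ when $|\tau|\gg(Mt)^\varepsilon$ (the small-$\tau$ range contributes a bounded-length $\tau$-integral handled directly), I would collect all the oscillatory factors in $y$ and in $\tau,\tau'$.

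The key point is that, after multiplying $\mathcal{J}_{1,J,\pm}(q,m,Ly)$ by $\overline{\mathcal{J}_{1,J,\pm}(q',m',Ly)}$ and by $e(-n_2Ly/qq'M_1)$, the total phase as a function of $y$ is $-(\tau-\tau')\log(Ly)/2\pi - n_2Ly/qq'M_1$ plus lower-order pieces, while the phase in each of $\tau,\tau'$ (after the $\gamma_{\pm}$ substitution) governs the localization $|m|\asymp tqM/N$. Repeated integration by parts in $y$ — using that $W(y)$ and the $U^*$-factors are smooth with bounded derivatives and that $y\asymp 1$ — shows $\mathcal{I}^*(n_2)$ is negligible unless $|n_2|L/qq'M_1\ll(Mt)^\varepsilon(1+|\tau|+|\tau'|)$, hence (since $|\tau|,|\tau'|\ll (Mt)^\varepsilon C^{-1}\sqrt{NK/M_1}$ and $q,q'\asymp C$) unless $|n_2|\ll (Mt)^\varepsilon C^2\cdot C^{-1}\sqrt{NK/M_1}/L = (Mt)^\varepsilon C\sqrt{NK/M_1}/L$; note $\sqrt{NK/M_1}=\sqrt{NKM_1}/M_1$, and I would reconcile the exact shape $(Mt)^\varepsilon C\sqrt{NKM_1}/L$ claimed in the statement by carefully tracking the $M_1$ powers through $\gamma_\pm$ and the $q^3M_1^3$ in the denominator (the factor $N L/q^3M_1^3$ raised to $-i\tau$). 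Then for the surviving range of $n_2$ I would bound $\mathcal{I}^*(n_2)$ by first doing the $\tau$- and $\tau'$-integrals. Here the main-term factor $1/(K\sqrt{t+\tau})$ from (5.6) gives $K^{-2}$ from the two copies and $(t+\tau)^{-1/2}(t+\tau')^{-1/2}\asymp t^{-1}$, while the $\mathrm{d}\zeta$-integral $\int_0^1 V(\cdots)\mathrm d\zeta$ contributes at most the length $\asymp N\zeta/(tqM_1)\cdot$(normalization)$\ll 1$ of the $\zeta$-support when combined with the $ma$ localization; in fact one should keep it as $O(1)$ or extract the precise size $\asymp |m|a/(tM_2)\cdot K^{-1}$ if needed. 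The $\tau$-integral itself has length $\asymp \min\{J,(Mt)^\varepsilon C^{-1}\sqrt{NK/M_1}\}$ trivially, but a stationary-phase or second-derivative estimate in $(\tau,\tau')$ — the relevant second derivative in $\tau$ of the phase $-\tau\log(NLy/q^3M_1^3)+3\tau\log(|\tau|/e\pi)-(t+\tau)\log(\cdots)$ is $\asymp 1/|\tau|+1/t\asymp 1/|\tau|$ — gains a factor $\sqrt{|\tau|}$ each, and with $|\tau|\asymp C^{-1}\sqrt{NK/M_1}$ this is exactly where the shape $N^{1/2}/(tK^{3/2}\cdots)$ emerges after balancing.

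Concretely, for $n_2=0$ the $y$-integral is non-oscillatory so I estimate $\mathcal{I}^*(0)\ll (Mt)^\varepsilon\cdot t^{-1}K^{-2}\cdot(\text{length of }\tau\text{-integral after IBP or 2nd-deriv})$, and the $\tau$-integral gives $\ll K\cdot$(something) — I would aim to show the product is $\ll (Mt)^\varepsilon N^{1/2}/(tK^{3/2}M_1^{1/2}C)$; note this matches $\mathcal{B}(C,\tau)$-type bounds already seen. For $n_2\neq 0$, the extra oscillation $e(-n_2Ly/qq'M_1)$ with $|n_2|L/qq'M_1$ possibly as large as $C^{-1}\sqrt{NK/M_1}\asymp|\tau|$ means the $y$-integral, combined with the $\tau$-integrals, effectively replaces one factor of $C$ in the $n_2=0$ bound by $(|n_2|L)^{1/2}C/(NKM_1)^{1/4}\cdot(\cdots)$ — more cleanly, stationary phase in $y$ after the $\tau$-integrals are done produces $\mathcal{I}^*(n_2)\ll (Mt)^\varepsilon N^{1/2}/(tK^{3/2}(|n_2|L)^{1/2})$. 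The \textbf{main obstacle} will be organizing the triple integral in $(y,\tau,\tau')$ so that the stationary-phase/integration-by-parts arguments interact correctly: one must decide whether to integrate in $\tau,\tau'$ first (localizing $m,m'$) and then in $y$, or vice versa, and keep careful track of which oscillatory factors are "large" and which are essentially smooth bumps. In particular, verifying that the combined phase has the stated second-derivative sizes uniformly over the $W_J$-pieces, and that the error terms from each stationary-phase application (of the form in Lemma 2(2) or Lemma 4) are all dominated by the claimed $B^*(n_2)$, is the delicate bookkeeping step; the algebra is routine but the ordering of operations and the uniformity in $q,q',m,m',J$ require care.
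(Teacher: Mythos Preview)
Your outline is essentially the paper's approach, but the order of operations and the identification of the hard step need sharpening. The paper does the $y$-integral \emph{first}: since the only $y$-dependence in the integrand is through $(NLy/q^3M_1^3)^{-i\tau}(NLy/q'^3M_1^3)^{i\tau'}W(y)e(-n_2Ly/qq'M_1)/y$, the $y$-integral is exactly $W^\dag(n_2L/qq'M_1,-i(\tau-\tau'))$. This is your ``integration by parts in $y$'' made precise. Your $M_1$ confusion in the truncation is just a dropped factor: from (2.6) one needs $|n_2|L/qq'M_1\ll|\tau-\tau'|\ll C^{-1}\sqrt{NK/M_1}$, hence $|n_2|\ll C^2M_1\cdot C^{-1}\sqrt{NK/M_1}/L=C\sqrt{NKM_1}/L$ --- you omitted the $M_1$ from $qq'M_1$.

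For $n_2=0$ the argument is simpler than you sketch: $W^\dag(0,-i(\tau-\tau'))$ is negligible unless $|\tau-\tau'|\le(Mt)^\varepsilon$, which collapses the $(\tau,\tau')$ double integral to essentially a single integral of length $\ll C^{-1}\sqrt{NK/M_1}$. Combined with $|H_J|\ll t^{-1/2}$ from each copy of $\mathcal J_1$ and the prefactor $K^{-2}$ from (5.6), this gives $B^*(0)$ at once; no stationary phase in $\tau$ is needed.

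For $n_2\neq 0$ the paper does \emph{not} do stationary phase in $y$ after the $\tau$-integrals. Instead it applies the expansion (2.5) to $W^\dag(n_2L/qq'M_1,-i(\tau-\tau'))$, extracting a main term carrying the oscillation $\big((\tau'-\tau)qq'M_1/2\pi e n_2L\big)^{i(\tau'-\tau)}$, and then (after a Fourier-inversion trick to separate the amplitude) bounds the resulting coupled $(\tau,\tau')$ double integral by $\ll J\,t^{-1+\varepsilon}$. This last bound is the genuinely nontrivial step, and your one-variable second-derivative heuristic $f''_\tau\asymp 1/|\tau|$ is not enough by itself: the phase now contains the mixed term $(\tau'-\tau)\log(\tau'-\tau)$, so $\tau$ and $\tau'$ cannot be treated independently. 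The paper simply cites Munshi \cite{Munshi32} for this double-integral estimate. That is precisely the step you flag as the ``main obstacle,'' and it is where the real content lies; everything else is bookkeeping.
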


The following estimate for the character sum $\mathscr{C}^*(n_2)$ was proved in \cite{Munshi32}.

\begin{lemma}
For $n_2\neq 0$, we have
\bna
\mathscr{C}^*(n_2)\ll \widehat{q}\widehat{q'}(\widehat{q},\widehat{q'},n_2)
M_1^{5/2}(M_1,n_2,m\widehat{q}^2-m'\widehat{q'}^2)^{1/2}
\ena
and for $n_2=0$, the sum vanishes unless $\widehat{q}=\widehat{q'}$
(i.e., $q=q'$) in which case
\bna
\mathscr{C}^*(0)\ll \widehat{q}^2R_{\widehat{q}}(a-a')M_1^{5/2}
(M_1,m-m')^{1/2},
\ena
where
$
R_c(u)=\sideset{}{^*}\sum\limits_{\gamma \bmod c}e\left(u \gamma/c\right)
$
is the Ramanujan sum.
\end{lemma}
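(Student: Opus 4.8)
The plan is to exploit the coprimality of the modulus $\widehat{q}\widehat{q'}$ with $M_1$. Since $M_1$ is prime and $(q_0,M)=1$ we have $(\widehat q\widehat q',M_1)=1$, so writing $c$ by the Chinese Remainder Theorem as $c\equiv\alpha\pmod{\widehat q\widehat q'}$, $c\equiv\beta\pmod{M_1}$, and using that in the formula for $\mathscr{B}(n_1,c,m,a,q)$ the factor $S(a\overline{M_1},c\overline{M_1};\widehat q)$ depends only on $\alpha$ while the $b$-sum $\sideset{}{^*}\sum_{b\bmod M_1}\overline{\chi_1}(m\overline{M_2}-b)S(\overline{b\widehat q},c\overline{\widehat q};M_1)$ depends only on $\beta$, the sum $\mathscr{C}^*(n_2)$ factorizes as $\chi_1(q)\overline{\chi_1(q')}\,\mathscr{C}_1\,\mathscr{C}_2$, where $\mathscr{C}_1=\sum_{\alpha\bmod\widehat q\widehat q'}S(a\overline{M_1},\alpha\overline{M_1};\widehat q)\overline{S(a'\overline{M_1},\alpha\overline{M_1};\widehat q')}\,e(n_2\overline{M_1}\alpha/\widehat q\widehat q')$ and $\mathscr{C}_2=\sum_{\beta\bmod M_1}F(\beta)\overline{F'(\beta)}\,e(n_2\overline{\widehat q\widehat q'}\beta/M_1)$ with $F(\beta)=\sideset{}{^*}\sum_{b\bmod M_1}\overline{\chi_1}(m\overline{M_2}-b)S(\overline{b\widehat q},\beta\overline{\widehat q};M_1)$ and $F'$ its analogue with $(m',a',q')$. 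It then suffices to show $|\mathscr{C}_1|\ll\widehat q\widehat q'(\widehat q,\widehat q',n_2)$, with $\mathscr{C}_1=0$ off $\widehat q=\widehat q'$ and $\mathscr{C}_1=\widehat q^2R_{\widehat q}(a-a')$ on it when $n_2=0$, together with $|\mathscr{C}_2|\ll M_1^{5/2}(M_1,n_2,m\widehat q^2-m'\widehat q'^2)^{1/2}$.

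For $\mathscr{C}_1$, which is precisely the character sum analyzed in \cite{Munshi32}, I would open the two Kloosterman sums (variables $d\bmod\widehat q$, $d'\bmod\widehat q'$), interchange summations, and evaluate the now complete additive character sum over $\alpha\bmod\widehat q\widehat q'$ by orthogonality; this forces a congruence linking $\bar d$, $\bar{d'}$ and $n_2$. For $n_2\neq 0$ the number of admissible $(d,d')$ is $(\widehat q,\widehat q',n_2)$, giving $\mathscr{C}_1\ll\widehat q\widehat q'(\widehat q,\widehat q',n_2)$; for $n_2=0$ the congruence is solvable only when $\widehat q=\widehat q'$, and there it collapses to $d\equiv d'$, leaving $\mathscr{C}_1=\widehat q^2R_{\widehat q}((a-a')\overline{M_1})=\widehat q^2R_{\widehat q}(a-a')$.

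The substantial part is $\mathscr{C}_2$, where one must extract square-root cancellation in several variables modulo the prime $M_1$. The key point is that $F(\beta)$ is, up to a $\chi_1$-factor, a rescaled discrete Fourier transform of a bounded-conductor object: opening the Kloosterman sum, $F(\beta)=\sideset{}{^*}\sum_{x\bmod M_1}c(x)\,e(\beta\overline{\widehat q}\,\overline{x}/M_1)$ with $c(x)=\sideset{}{^*}\sum_{b}\overline{\chi_1}(m\overline{M_2}b-1)\chi_1(b)\,e(\overline{\widehat q}\,bx/M_1)$ (after $b\mapsto\bar b$), so $c(x)$ is itself the Fourier transform of the multiplicative trace function $b\mapsto\overline{\chi_1}(m\overline{M_2}b-1)\chi_1(b)$, hence a bounded-conductor algebraic exponential sum of size $\ll M_1^{1/2}$. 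Consequently $\beta\mapsto F(\beta)$ (and $\beta\mapsto F'(\beta)$) is, up to constants, a trace function of bounded conductor and size $\ll M_1$, so $\beta\mapsto F(\beta)\overline{F'(\beta)}\,e(n_2\overline{\widehat q\widehat q'}\beta/M_1)$ is again a trace function of bounded conductor of size $\ll M_1^{2}$, and Weil's bound (the Riemann Hypothesis over $\mathbb{F}_{M_1}$, or, equivalently, a carefully iterated application of Weil's estimates for Kloosterman and mixed character sums that does not discard phases prematurely) yields $\mathscr{C}_2\ll M_1^{2}\cdot M_1^{1/2}=M_1^{5/2}$ whenever this trace function has no geometrically trivial constituent. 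Unwinding the two Fourier sheaves, the only configuration producing such a constituent — i.e. in which the $(m,\widehat q)$-sheaf and the $(m',\widehat q')$-sheaf become additive translates of one another through $e(n_2\overline{\widehat q\widehat q'}\beta/M_1)$ — is $M_1\mid n_2$ together with $m\widehat q^2\equiv m'\widehat q'^2\pmod{M_1}$, where one loses exactly one factor $M_1^{1/2}$; this is the content of the extra gcd factor. When $n_2=0$ (hence $\widehat q=\widehat q'$, by the vanishing of $\mathscr{C}_1$) the same analysis with the additive character absent, together with $m\widehat q^2-m'\widehat q'^2\equiv(m-m')\widehat q^2$, gives $\mathscr{C}_2\ll M_1^{5/2}(M_1,m-m')^{1/2}$. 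Multiplying the estimates for $\mathscr{C}_1$ and $\mathscr{C}_2$ proves the lemma, following \cite{Munshi31,Munshi32}.

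I expect the genuine obstacle to be exactly the $\mathscr{C}_2$ bound: a crude triangle inequality after a single Weil step only delivers $M_1^{7/2}$, so to reach $M_1^{5/2}$ one must treat the sum left over after the complete $\beta$-summation (a sum over the inner Kloosterman variable and the two $b$-variables) as a genuine multi-variable complete sum, or equivalently use that $F$ is a twice-iterated Fourier transform of a constant-conductor datum; and one must isolate precisely the degenerate locus $M_1\mid(n_2,m\widehat q^2-m'\widehat q'^2)$ on which the available cancellation drops by $M_1^{1/2}$. By contrast the CRT factorization and the $\mathscr{C}_1$ computation are routine applications of the Chinese Remainder Theorem and orthogonality of additive characters.
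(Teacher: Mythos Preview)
Your proposal is correct and follows precisely the approach of \cite{Munshi32}, which is all the paper does here: it states the lemma and cites \cite{Munshi32} for the proof without reproducing any details. Your CRT factorization $\mathscr{C}^*(n_2)=\chi_1(q)\overline{\chi_1(q')}\,\mathscr{C}_1\,\mathscr{C}_2$, the orthogonality computation for $\mathscr{C}_1$, and the identification of $\mathscr{C}_2$ as a complete sum over $\mathbb{F}_{M_1}$ requiring square-root cancellation (with the degenerate locus $M_1\mid(n_2,\,m\widehat q^2-m'\widehat{q'}^2)$ costing exactly $M_1^{1/2}$) match Munshi's argument, and your diagnosis that the genuine content lies in obtaining $M_1^{5/2}$ rather than the crude $M_1^{7/2}$ is accurate.
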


By (6.2), (6.3) and Lemma 10, we have, up to an arbitrarily small error term,
\bna
\mathcal {T}(k)&\ll&\frac{(Mt)^{\varepsilon}}{M_1C^5}\sum_{n_1\leq \sqrt{3L}}n_1^2
\mathop{\sum_{q=q_0M_2^k\sim C\atop(q_0,M)=1}}_{n_1|q}
\sum_{Q<a\leq q+Q\atop (a,q)=1}\frac{1}{a}
\sum_{1\leq |m|\leq q(Mt)^{1+\varepsilon}/N\atop m\equiv M_2\overline{a}\bmod q}
\mathop{\sum_{q'=q_0'M_2^k\sim C\atop(q_0',M)=1}}_{n_1|q'}
\sum_{Q<a'\leq q'+Q\atop (a',q')=1}\frac{1}{a'}
\nonumber\\
&&
\sum_{1\leq |m'|\leq q'(Mt)^{1+\varepsilon}/N
\atop m'\equiv M_2\overline{a'}\bmod q'}
\sum_{|n_2|\leq (Mt)^{\varepsilon} C\sqrt{NKM_1}/L }
|\mathscr{C}^*(n_2)|B^{*}(n_2).
\ena
Note that for $(q,M_2)=1$, the condition $m\equiv M_2\overline{a}\bmod q$ implies that
$a\equiv \overline{m}M_2\bmod q$.
By Lemmas 10 and 11, the contribution from $k=0$ is
\bea
&&\frac{(Mt)^{\varepsilon}}{M_1C^5}\sum_{n_1\leq \sqrt{3L}}n_1^2
\mathop{\sum_{q\sim C\atop(q,M)=1}}_{n_1|q}
\sum_{1\leq |m|\leq q(Mt)^{1+\varepsilon}/N}
\sum_{Q<a\leq q+Q\atop a\equiv M_2\overline{m}\bmod 1}\frac{1}{a}
\mathop{\sum_{q'\sim C\atop(q',M)=1}}_{n_1|q'}
\nonumber\\
&&\sum_{1\leq |m'|\leq q'(Mt)^{1+\varepsilon}/N}
\sum_{Q<a'\leq q'+Q\atop a'\equiv M_2\overline{m'}\bmod q'}\frac{1}{a'}
\sum_{|n_2|\leq (Mt)^{\varepsilon} C\sqrt{NKM_1}/L }
|\mathscr{C}^*(n_2)|B^{*}(n_2)\nonumber\\
&\ll&\frac{(Mt)^{\varepsilon}}{Q^2M_1C^5}
\frac{N^{1/2}}{tK^{3/2}M_1^{1/2}C}
\sum_{n_1\leq \sqrt{3L}}n_1^2
\mathop{\sum_{q\sim C\atop(q,M)=1}}_{n_1|q}
\sum_{1\leq |m|\leq q(Mt)^{1+\varepsilon}/N}
\sum_{1\leq |m'|\leq q'(Mt)^{1+\varepsilon}/N}\nonumber\\
&&\widehat{q}^2(m-m',\widehat{q})M_1^{5/2}
(M_1,m-m')^{1/2}\nonumber\\
&&+\frac{(Mt)^{\varepsilon}}{Q^2M_1C^5}
\sum_{n_1\leq \sqrt{3L}}n_1^2
\mathop{\sum_{q\sim C\atop(q,M)=1}}_{n_1|q}
\mathop{\sum_{q'\sim C\atop(q',M)=1}}_{n_1|q'}
\sum_{1\leq |m|\leq q(Mt)^{1+\varepsilon}/N}
\sum_{1\leq |m'|\leq q'(Mt)^{1+\varepsilon}/N}\nonumber\\
&&\sum_{1\leq |n_2|\leq (Mt)^{\varepsilon}C\sqrt{NKM_1}/L}
\widehat{q}\widehat{q'}(\widehat{q},\widehat{q'},n_2)
M_1^{5/2}(M_1,n_2)^{1/2}
\frac{N^{1/2}}{tK^{3/2}(|n_2|L)^{1/2}}\nonumber\\
&\ll& \frac{M_1^{5/2}M^2t}{N^{5/2}K^{1/2}}
+\frac{M_1^2M^2t}{N^{3/2}KL}.
\eea
Note that for $k\geq 1$, $m\equiv M_2\overline{a}\bmod q$
implies that $M_2|m$ and $
a\equiv \overline{(m/M_2)} \bmod q/M_2.$
Thus
\bna
\sum_{Q<a\leq q+Q\atop m\equiv M_2\overline{a}\bmod q}\frac{1}{a}=\sum_{i=0}^{M_2-1}
\sum_{Q+iq/M_2<a\leq Q+(i+1)q/M_2\atop
a\equiv \overline{(m/M_2)} \bmod q/M_2}
\frac{1}{a}
=\sum_{i=0}^{M_2-1}\frac{1}{a_i(m,q)}\asymp \frac{M_2}{Q},
\ena
where $a_i(m,q)$ is the unique solution of $a\equiv \overline{(m/M_2)} \bmod q/M_2$
in $Q+iq/M_2<a\leq Q+(i+1)q/M_2$. Bounding similarly as the case
$k=0$, one sees that the contribution from $k\neq 0$ is dominated by
(6.6). Therefore,
\bna
\mathcal {T}(k)\ll \frac{M_1^{5/2}M^2t}{N^{5/2}K^{1/2}}
+\frac{M_1^2M^2t}{N^{3/2}KL}
\ena
and by (6.1) (also recall that $L\leq
N^{1/2+\varepsilon}K^{3/2}M_1^{3/2}$),
\bea
\mathcal {S}_{1,1}(N,C,L,J,\pm)&\ll& \sqrt{\frac{N^3L}{M_1^3M}}\left(
\frac{M_1^{5/4}M\sqrt{t}}{N^{5/4}K^{1/4}}
+\frac{M_1M\sqrt{t}}{N^{3/4}\sqrt{KL}}\right)\nonumber\\
&\ll&(Mt)^{\varepsilon}N^{3/4}(Mt)^{1/2}
\left(\frac{M_1^{1/2}K^{1/2}}{N^{1/4}}+
\frac{1}{M_1^{1/2}K^{1/2}}\right).
\eea

\noindent{\bf 6.2. Bounding $\mathcal {S}_{1,2}(N,C,L,J,\pm)$.}
Applying Cauchy inequality and (2.1), we have
\bea
\mathcal {S}_{1,2}(N,C,L,J,\pm)
\ll \sqrt{\frac{N^3L}{M_1^3M}}\sum_{0\leq k\leq \log C}
\int\limits_{|\tau|\leq (Mt)^{\varepsilon}C^{-1}\sqrt{NK/M_1}}
\sqrt{\mathcal {R}(k,\tau)}
\mathrm{d}\tau,
\eea
where temporarily,
\bna
\mathcal {R}(k,\tau)&=&\sum_{n_1}\sum_{n_2}\frac{1}{n_2}W\left(\frac{n_1^2n_2}{L}\right)
\left|\mathop{\sum_{q=q_0M_2^k\sim C\atop (q_0,M)=1}}_{n_1|q}\frac{\chi_2(q_0)}{q^{3/2}}
\sum_{Q<a\leq q+Q \atop (a,q)=1}\frac{1}{a}
\sum_{1\leq |m|\leq q(Mt)^{1+\varepsilon}/N \atop m\equiv M_2\overline{a}\bmod q}
\overline{\chi_2}(m_0)\right.\\
&&\left.\mathscr{B}(n_1,\pm n_2,m,a,q)
\mathcal {J}_2(q,m,\tau)\right|^2.
\ena

As before, we open the absolute square and interchange the order of summations to get
\bna
\mathcal {R}(k,\tau)
&=&\sum_{n_1\leq \sqrt{3L}}
\mathop{\sum_{q=q_0M_2^k\sim C\atop (q_0,M)=1}}_{n_1|q}\frac{\chi_2(q_0)}{q^{3/2}}
\sum_{Q<a\leq q+Q \atop (a,q)=1}\frac{1}{a}
\sum_{1\leq |m|\leq q(Mt)^{1+\varepsilon}/N \atop m\equiv M_2\overline{a}\bmod q}
\overline{\chi_2}(m_0)\mathcal {J}_2(q,m,\tau)\\
&&\mathop{\sum_{q'=q_0'M_2^k\sim C\atop (q_0',M)=1}}_{n_1|q'}
\frac{\overline{\chi_2}(q_0')}{q'^{3/2}}
\sum_{Q<a'\leq q'+Q \atop (a',q')=1}\frac{1}{a'}
\sum_{1\leq |m'|\leq q'(Mt)^{1+\varepsilon}/N \atop m'\equiv M_2\overline{a'}\bmod q'}
\chi_2(m_0')\overline{\mathcal {J}_2(q',m',\tau)}R^*,
\ena
where
\bna
R^*=\sum_{n_2}\frac{1}{n_2}W\left(\frac{n_1^2n_2}{L}\right)
\mathscr{B}(n_1,\pm n_2,m,a,q)\overline{\mathscr{B}(n_1,\pm n_2,m',a',q')}.
\ena
Applying Poisson summation with modulus $\widehat{q}\widehat{q'}M_1$, we obtain
\bna
R^*
&=&\frac{n_1^2}{qq'M_1}\sum_{n_2\in \mathbb{Z}}
\mathscr{C}^*(n_2)W^{\dag}\left(\frac{n_2L}{qq'M_1},0\right),
\ena
where $\mathscr{C}^*(n_2)$ is defined in (6.4).
By (2.6), the integral is arbitrarily small if
$|n_2|\gg (Mt)^{\varepsilon}C^2M_1/L$.
By (5.7),
\bna
\mathcal {R}(k,\tau)&\ll&(Mt)^{\varepsilon}\frac{\mathcal {B}(C,\tau)^2}{M_1C^5}\sum_{n_1\leq 2C}n_1^2
\mathop{\sum_{q=q_0M_2^k\sim C\atop (q_0,M)=1}}_{n_1|q}
\sum_{Q<a\leq q+Q \atop (a,q)=1}\frac{1}{a}
\sum_{1\leq |m|\leq q(Mt)^{1+\varepsilon}/N \atop m\equiv M_2\overline{a}\bmod q}
\mathop{\sum_{q'=q_0'M_2^k \sim C\atop (q_0',M)=1}}_{n_1|q'}\\
&&\sum_{Q<a'\leq q'+Q \atop (a',q')=1}\frac{1}{a'}
\sum_{1\leq |m'|\leq q'(Mt)^{1+\varepsilon}/N \atop m'\equiv M_2\overline{a'}\bmod q'}
\sum_{|n_2|\leq (Mt)^{\varepsilon}C^2M_1/L}
|\mathscr{C}(n_2)|,
\ena
where $\mathcal {B}(C,\tau)$ is defined in (5.8).
By Lemmas 10 and 11, we have
\bea
R(0,\tau)&\ll&(Mt)^{\varepsilon}\frac{\mathcal {B}(C,\tau)^2}{M_1Q^2C^5}
\sum_{n_1\leq 2C}n_1^2
\mathop{\sum_{q\sim C\atop (q,M)=1}}_{n_1|q}
\sum_{1\leq |m|\leq C(Mt)^{1+\varepsilon}/N}\nonumber
\\&&\sum_{1\leq |m'|\leq C(Mt)^{1+\varepsilon}/N}
\widehat{q}^2\left(\widehat{q},m-m'\right)M_1^{5/2}
(M_1,m-m')^{1/2}\nonumber\\
&&+(Mt)^{\varepsilon}\frac{\mathcal {B}(C,\tau)^2}{M_1Q^2C^5}
\sum_{n_1\leq 2C}n_1^2
\mathop{\sum_{q\sim C\atop (q,M)=1}}_{n_1|q}
\mathop{\sum_{q'\sim C\atop (q',M)=1}}_{n_1|q'}
\sum_{1\leq |m|\leq C(Mt)^{1+\varepsilon}/N}\nonumber
\\&&\sum_{1\leq |m'|\leq C(Mt)^{1+\varepsilon}/N}
\sum_{1\leq |n_2|\leq (Mt)^{\varepsilon}C^2M_1/L}
\widehat{q}\widehat{q'}
\left(\widehat{q},\widehat{q'},n_2\right)
M_1^{5/2}(M_1,n_2)^{1/2}\nonumber\\
&\ll&(Mt)^{\varepsilon}\mathcal {B}(C,\tau)^2
\left(\frac{KM_1^3 Mt}{N^2}+
\frac{KC^3M_1^{7/2} (M t)^2}{N^3L}\right).
\eea
and similarly, the contribution from $k\neq 0$ is dominated by (6.9).
Thus by (6.8),
\bna
\mathcal {S}_{1,2}(N,C,L,J,\pm)
&\ll&\sqrt{\frac{N^3L}{M_1^3M}}
\left(\frac{K^{1/2}M_1^{3/2} (Mt)^{1/2}}{N}+
\frac{K^{1/2}C^{3/2}M_1^{7/4} M t}{N^{3/2}L^{1/2}}\right)\\
&&\times\int\limits_{|\tau|\leq (Mt)^{\varepsilon}C^{-1}\sqrt{NK/M_1}}
\mathcal {B}(C,\tau)
\mathrm{d}\tau,
\ena
where by (5.8)
\bna
\int\limits_{|\tau|\leq (Mt)^{\varepsilon}C^{-1}\sqrt{NK/M_1}}
\mathcal {B}(C,\tau)
\mathrm{d}\tau
\ll\frac{(Mt)^{\varepsilon}}{t^{1/2}K^{1/2}}
\left(1+\frac{N}{C^2K^{3/2}M_1}\right).
\ena
Thus (note that $L\ll N^{1/2+\varepsilon}K^{3/2}M_1^{3/2}$
and $N/(Mt)^{1+\varepsilon}\leq C\leq \sqrt{N/KM_1}$)
\bna
\mathcal {S}_{1,2}(N,C,L,J,\pm)
\ll(Mt)^{\varepsilon}N^{3/4}
\left(K^{3/4}M_1^{3/4}+\frac{\left(Mt\right)^2}{NK^{3/4}M_1^{1/4}}
+\frac{(M t)^{1/2}}{K^{3/4}M_1^{1/2}}
+\frac{M t}{N^{1/4}K^{3/2}M_1^{3/4}}\right),
\ena
where the second term dominates the last two terms by the range of $M_1$ and our choice of $K$ in (1.5).
Therefore,
\bea
\mathcal {S}_{1,2}(N,C,L,J,\pm)
\ll(Mt)^{\varepsilon}N^{3/4}
\left(K^{3/4}M_1^{3/4}+\frac{\left(Mt\right)^2}{NK^{3/4}M_1^{1/4}}\right).
\eea
Under the assumptions $(M t)^{6/5}/(NM_1)^{3/5}\leq
K\leq (Mt)^2/NM_1$, we see that the bound in (6.10) can be controlled by
(6.7).
By (6.7), Lemmas 7 and 8 we conclude that
\bna
\mathcal {S}_{1}(N)
\ll  (Mt)^{\varepsilon}N^{3/4}(Mt)^{1/2}
\left(\frac{M_1^{1/2}K^{1/2}}{N^{1/4}}+
\frac{1}{M_1^{1/2}K^{1/2}}\right).
\ena
Then Proposition 2 follows in view of our choice of $K$ in (1.5).

\medskip

\noindent{\bf 6.3. Proof of Lemma 10.} We follow closely \cite{Munshi32}.
By (4.3) and (6.5), $\mathcal {I}^*(n_2)$ is
\bea
&&\frac{1}{4\pi^2}\int_{\mathbb{R}}\int_{\mathbb{R}}
\left(\frac{NL}{q^3M_1^3}\right)^{-i\tau}
\left(\frac{NL}{q'^3M_1^3}\right)^{i\tau'}
\gamma_{\pm}\left(-\frac{1}{2}+i\tau\right)
\overline{\gamma_{\pm}\left(-\frac{1}{2}+i\tau'\right)}\nonumber\\
&&\mathcal {J}_{1}(q,m,\tau)\overline{\mathcal {J}_{1}(q',m',\tau')}
W_J(\tau)W_J(\tau')
W^{\dag}\left(\frac{n_2L}{qq'M_1},-i(\tau-\tau')\right)\mathrm{d}\tau\mathrm{d}\tau'.
\eea
By (2.6), the integral $W^{\dag}\left(n_2L/qq'M_1,-i(\tau-\tau')\right)$
is negligible if $|n_2|\geq  (Mt)^{\varepsilon} C\sqrt{NKM_1}/L$.
For smaller $|n_2|$, we plug (5.6) into (6.11) to get
\bna
\mathcal {I}^*(n_2)
&=&\frac{|c_3|^2}{4\pi^2K^2}\int_{\mathbb{R}}\int_{\mathbb{R}}
\left(\frac{NL}{q^3M_1^3}\right)^{-i\tau}
\left(\frac{NL}{q'^3M_1^3}\right)^{i\tau'}
\gamma_{\pm}\left(-\frac{1}{2}+i\tau\right)
\nonumber\\
&&\overline{\gamma_{\pm}\left(-\frac{1}{2}+i\tau'\right)}
\left(-\frac{(t+\tau)qM}{2\pi eNm}\right)^{-i(t+\tau)}
\left(-\frac{(t+\tau')q'M}{2\pi eNm'}\right)^{i(t+\tau')}
\\&&H_J(q,m,a,\tau)H_J(q',m',a',\tau')
W^{\dag}\left(\frac{n_2L}{qq'M_1},-i(\tau-\tau')\right)
\mathrm{d}\tau\mathrm{d}\tau',
\ena
where
\bna
H_J(q,m,a,\tau)&=&\frac{1}{\sqrt{t+\tau}}
\left(-\frac{(t+\tau)qM}{2\pi eNm}\right)^{3/2}
V\left(\frac{(t+\tau)qM}{-2\pi Nm}\right)
U^*\left(\frac{(t+\tau)qM}{-2\pi Nm}\right)
W_J(\tau)\\&&
\int_0^1V\left(\frac{\tau}{K}-\frac{(t+\tau)M_2\zeta}{Kma}\right)\mathrm{d}\zeta
\ena
satisfying the bound
\bna
H_J(q,m,a,\tau)\ll t^{-1/2},
\quad
\quad
\frac{\partial}{\partial \tau}H_J(q,m,a,\tau)
\ll \frac{(M t)^{\varepsilon}}{t^{1/2}(1+|\tau|)}.
\ena

For $n_2=0$, by (2.6) we have $W^{\dag}\left(0,-i(\tau-\tau')\right)$
is arbitrarily small if $|\tau-\tau'|\geq (Mt)^{\varepsilon}$.
For $|\tau-\tau'|\leq (Mt)^{\varepsilon}$, we have $W^{\dag}\left(0,-i(\tau-\tau')\right)\ll 1$ and
\bna
\mathcal {I}^*(n_2)
\ll(Mt)^{\varepsilon}\frac{N^{1/2}}{tK^{3/2}M_1^{1/2}C}.
\ena
For $n_2\neq 0$, we apply (2.5) to get
\bna
W^{\dag}\left(\frac{n_2L}{qq'M_1},-i(\tau-\tau')\right)
&=&\frac{c_4}{\sqrt{\tau'-\tau}}
W\left(\frac{(\tau'-\tau)qq'M_1}{2\pi n_2 L}\right)
\left(\frac{(\tau'-\tau)qq'M_1}{2\pi en_2 L}\right)^{i(\tau'-\tau)}\nonumber\\
&&+O\left(\min\left\{\frac{1}{|\tau'-\tau|^{3/2}},
\left(\frac{C^2M_1}{|n_2|L}\right)^{3/2}\right\}\right)
\ena
for some absolute constant $c_4$.
The contribution from the above $O$-term towards $\mathcal {I}^*(n_2)$
is bounded by
\bna
&&\frac{1}{K^2t}\int\limits_{|\tau|\leq 1+2|J|}\int\limits_{|\tau'|\leq 1+ 2|J|}
\min\left\{\frac{1}{|\tau'-\tau|^{3/2}},
\left(\frac{C^2M_1}{|n_2|L}\right)^{3/2}\right\}
\mathrm{d}\tau\mathrm{d}\tau'\nonumber\\
&\ll&(Mt)^{\varepsilon}\frac{N^{1/2}}{tK^{3/2}(|n_2|L)^{1/2}}.
\ena
For the main term, we write by Fourier inversion
\bna
\left(\frac{2\pi n_2L}{(\tau'-\tau)qq'M_1}\right)^{1/2}
W\left(\frac{(\tau'-\tau)qq'M_1}{2\pi n_2 L}\right)
=\int_{\mathbb{R}}W^{\dag}\left(r,\frac{1}{2}\right)
e\left(\frac{(\tau'-\tau)qq'M_1}{2\pi n_2 L}r\right)\mathrm{d}r.
\ena
Then $\mathcal {I}^*(n_2)$ can be written as
\bna
&&\frac{c_5}{K^2}\left(\frac{qq'M_1}{|n_2|L}\right)^{1/2}
\int_{\mathbb{R}}W^{\dag}\left(r,\frac{1}{2}\right)
\int_{\mathbb{R}}\int_{\mathbb{R}}
\gamma_{\pm}\left(-\frac{1}{2}+i\tau\right)
\overline{\gamma_{\pm}\left(-\frac{1}{2}+i\tau'\right)}
H_J(q,m,a,\tau)
\nonumber\\
&&H_J(q',m',a',\tau')\left(\frac{NL}{q^3M_1^3}\right)^{-i\tau}
\left(\frac{NL}{q'^3M_1^3}\right)^{i\tau'}
\left(-\frac{(t+\tau)qM}{2\pi eNm}\right)^{-i(t+\tau)}
\left(-\frac{(t+\tau')q'M}{2\pi eNm'}\right)^{i(t+\tau')}
\\
&&\left(\frac{(\tau'-\tau)qq'M_1}{2\pi en_2 L}\right)^{i(\tau'-\tau)}
e\left(\frac{(\tau'-\tau)qq'M_1}{2\pi n_2 L}r\right)
\mathrm{d}\tau\mathrm{d}\tau'\mathrm{d}r+O\left((Mt)^{\varepsilon}B^{*}(n_2)\right)
\ena
for some absolute constant $c_5$, where for $n_2\neq 0$,
\bna
B^*(n_2)=\frac{N^{1/2}}{tK^{3/2}(|n_2|L)^{1/2}}.
\ena

Note that for $J=0$, we have trivially
$\mathcal {I}^*(n_2)\ll
N^{1/2}/tK^{5/2}(|n_2|L)^{1/2}$
which is dominated by $B^{*}(n_2)$. In the following,
for notational simplicity we only consider the case of $J>0$.
The same analysis holds for $J<0$.
By (2.3), we write
\bea
\mathcal {I}^*(n_2)
&=&\frac{c_5}{K^2}\left(\frac{qq'M_1}{|n_2|L}\right)^{1/2}
\int_{\mathbb{R}}W^{\dag}\left(r,\frac{1}{2}\right)
\int_{\mathbb{R}}\int_{\mathbb{R}}
g(\tau,\tau')
e\left(f(\tau,\tau')\right)
\mathrm{d}\tau\mathrm{d}\tau'\mathrm{d}r\nonumber\\
&&+O\left((Mt)^{\varepsilon}B^{*}(n_2)\right),
\eea
where
\bna
g(\tau,\tau')=\Psi_{\pm}(\tau)\overline{\Psi_{\pm}(\tau')}
H_J(q,m,a,\tau)H_J(q',m',a',\tau')
\ena
and
\bna
2\pi f(\tau,\tau')&=&3\tau\log\left(\frac{\tau}{e\pi}\right)
-3\tau'\log\left(\frac{\tau'}{e\pi}\right)
-\tau\log\left(\frac{NL}{q^3M_1^3}\right)
+\tau'\log\left(\frac{NL}{q'^3M_1^3}\right)\\
&&-(t+\tau)\log\left(-\frac{(t+\tau)qM}{2\pi eNm}\right)
+(t+\tau')\log\left(-\frac{(t+\tau')q'M}{2\pi eNm'}\right)\\
&&+(\tau'-\tau)\log\left(\frac{(\tau'-\tau)qq'M_1}{2\pi en_2 L}\right)+
\frac{(\tau'-\tau)qq'M_1\ell^2}{n_2 L}r.
\ena
For the double integral over $\tau$, $\tau'$ in (6.12), Munshi \cite{Munshi32}
showed that
\bna
\int_{\mathbb{R}}\int_{\mathbb{R}}
g(\tau,\tau')
e\left(f(\tau,\tau')\right)
\mathrm{d}\tau\mathrm{d}\tau'\ll Jt^{-1+\varepsilon}.
\ena
Then using $W^{\dag}\left(r,\frac{1}{2}\right)\ll_j |r|^{-j}$ we obtain
\bna
\mathcal {I}^*(n_2)\ll(Mt)^{\varepsilon}B^{*}(n_2).
\ena
This completes the proof of Lemma 10.

\bigskip

\noindent
{\sc Acknowledgements.}
The author expresses her heartfelt thanks to Roman Holowinsky and Yongxiao Lin
for many valuable
suggestions, Ritabrata Munshi for useful discussions related to his work and she would like to
thank Department of Mathematics, The Ohio State University for
hospitality.
This work is supported by
the National Natural Science Foundation of China (Grant No. 11101239),
Young Scholars Program of Shandong University, Weihai (Grant No. 2015WHWLJH04),
the Natural Science Foundation of Shandong Province (Grant No. ZR2016AQ15)
and a scholarship from the China
Scholarship Council.

\end{document}